\newcommand{\vertiii}[1]{{\left\vert\kern-0.25ex\left\vert\kern-0.25ex\left\vert #1
    \right\vert\kern-0.25ex\right\vert\kern-0.25ex\right\vert}}
\newcommand*{\rom}[1]{\expandafter\@slowromancap\romannumeral #1@}
\theoremstyle{plain}
\newtheorem*{theorem*}{Theorem}
\newenvironment{customthm}[1]
  {\innercustomthm}
  {\endinnercustomthm}
\newcommand{\area}{\operatorname{area}}
\subjclass{}%
\keywords{}%
\date{\today}%
\dedicatory{}%
\newcommand{\ssm}{\smallsetminus}
\title{A dynamical Borel-Cantelli lemma \\ via improvements to Dirichlet's theorem}
\author{Dmitry Kleinbock}
\address{Brandeis University, Waltham MA, USA, 02454-9110}
\email{kleinboc@brandeis.edu} 
\author{Shucheng Yu}
\address{Department of Mathematics, Technion, Haifa, Israel}
\email{yushucheng@campus.technion.ac.il}
\thanks{D.K.\ has been supported by NSF grants  DMS-1600814 and DMS-1900560. S.Y.\ acknowledges that this project has received funding from the European Research Council (ERC) under the European Union's Horizon 2020 research and innovation program (grant agreement No.\ 754475).}
\begin{document}

\begin{abstract} 
Let $X\cong \SL_2(\R)/\SL_2(\Z)$ be the space 
of unimodular lattices in $\R^2$, and for any $r\ge 0$ denote by $K_r\subset X$ the set of lattices such that all its nonzero vectors have supremum norm at least $e^{-r}$. These are compact nested subset{s} of $X$, with $K_0 = {\bigcap}_{r}K_r$ being the union of two closed horocycles. We use {an} explicit second moment formula for the Siegel transform of the indicator functions of squares in $\R^2$ centered at the origin to derive an asymptotic formula for the volume of sets $K_r$ as $r\to 0$. Combined with a zero-one law for the set of the $\psi$-Dirichlet numbers established by Kleinbock and Wadleigh \cite{KleinbockWadleigh2018}, this gives a new dynamical Borel-Cantelli lemma for the geodesic flow on $X$ with respect to the family of shrinking targets $\{K_r\}$.
\end{abstract}
\maketitle
\section{Introduction}
Let $(X,\mu)$ be a probability space,
and let $\{a_s\}_{s\in\R}
$ be 
a one-parameter measure-preserving  flow on $X$. Given a family of measurable subsets $\{B_s\}_{s>0}$ of $X$ with $\mu(B_s)\to 0$ as $s\to \infty$ (called \textit{shrinking targets}), the \textit{shrinking targets problem} 
asks for a dichotomy on whether generic orbits of $\{a_s\}_{s>0}$ would hit the shrinking targets indefinitely. 
That is, we are looking for a zero-one law for the measure of the limsup set 
$$B_{\infty}:=\limsup_{s\to\infty}a_{-s}B_s=\left\{x\in X\ \left|\ a_sx\in B_s\ \textrm{for {an unbounded set of} $s>0$}\right.\right\}.$$
For any $n\in \N$ let 
\begin{equation}\label{equ:thickenings}
\widetilde{B}_n:=\bigcup_{0\leq s<1}a_{-s}B_{n+s}\end{equation}
 be the thickening of the shrinking targets $\{B_s\}_{n\leq s<n+1}$ along the flow $\{{a_{-s}}\}_{0\leq s<1}$. Note that $a_nx\in \widetilde{B}_n$ if and only if there exists some 
$s\in [n, n+1)$ such that $a_sx\in B_s$. We thus have 
\begin{equation}\label{equ:discrete}B_{\infty}=\limsup_{n\to\infty}a_{-n}\widetilde{B}_n=\big\{x\in X\ \bigm|\ a_nx\in \widetilde{B}_n\ \textrm{infinitely often}\big\},\end{equation} 
and the classical Borel-Cantelli lemma implies that 
\begin{equation}\label{equ:easy}
\sum_{n}\mu(\widetilde{B}_n)<\infty\quad\Longrightarrow\quad \mu(B_{\infty})=0.\end{equation} 
 On the other hand, {following the terminology of \cite{ChernovKleinbock01}} we say the family of shrinking targets $\{B_s\}_{s>0}$ is \textit{Borel-Cantelli (BC)} for the flow $\{a_s\}_{s > 0}$ if $\mu(B_{\infty})=1$. Thus a necessary condition for $\{B_s\}_{s>0}$ to be BC for $\{a_s\}_{s > 0}$ is that the sequence of its thickenings has divergent sum of measures, {and we say $\{B_s\}_{s>0}$ satisfies \textit{a dynamical Borel-Cantelli lemma for $\{a_s\}_{s > 0}$} if this is also a sufficient condition}.

The shrinking targets problem for continuous time flow in the context of homogeneous spaces was first studied by Sullivan in \cite{Sullivan1982}, where he established a logarithm law for the fastest rate of geodesic cusp excursions in finite-volume hyperbolic manifolds. Later using the exponential mixing rate and a smooth approximation argument, {the first-named author} and Margulis \cite{KleinbockMargulis1999} proved that the family of cusp neighborhoods $\{\Phi^{-1}(r(s),\infty)\}_{s>0}$ with divergent sum of measures is BC for any diagonalizable flow on $(G/\G,\mu)$,
where $G$ is a connected semisimple Lie group without compact factors, $\G<G$ is an irreducible 
lattice, and $\mu$ is the probability measure on $X = G/\G$ coming from a Haar measure {on} $G$.
Here $\Phi$ is a distance-like function on $X$ \cite[Definition 1.6]{KleinbockMargulis1999} and $r(\cdot)$ is a quasi-increasing function \cite[Section 2.4]{KleinbockMargulis1999}. Later Maucourant \cite{Maucourant06} obtained a similar dynamical Borel-Cantelli lemma for geodesic flows making excursions into shrinking hyperbolic balls (with a fixed center) on a finite-volume hyperbolic manifold. See \cite{Athreya09} for a survey on shrinking targets problems in dynamical systems.

One main reason that such dynamical Borel-Cantelli lemmas have gained much attention is due to their connections to metric number theory which {were} first explored by Sullivan in \cite{Sullivan1982}. Such connections were made more apparent later in \cite{KleinbockMargulis1999}. Let $m, l$ be two positive integers and let $M_{m,l}(\R)$ be the space of $m$ by $l$ real matrices. Given $\psi:[t_0,\infty)\to (0,\infty)$ a continuous non-increasing function, let us define $W(\psi)\subset M_{m,l}(\R)$, the set of $\psi$-approximable $m\times l$ real matrices such that $A\in W(\psi)$ if and only if there are infinitely many $\bm{q}\in \Z^{{l}}$ satisfying
\begin{equation*}\label{equ:approx}
\|A\bm{q}-\bm{p}\|^m<\psi\left(\|\bm{q}\|^{l}\right)\quad \textrm{for some $\bm{p}\in \Z^m$},
\end{equation*}
where $\|\cdot\|$ is the supremum norm on respective Euclidean spaces. The classical Khinchin-Groshev theorem gives an exact criterion on when $W(\psi)$ has full or zero  Lebesgue measure.
\begin{customthm}{KG}[Khinchin-Groshev]\label{KG}  
Given a continuous non-increasing $\psi$, 
$W(\psi)$ has full (resp.\ zero) Lebesgue measure if and only if the series $\sum_k\psi(k)$ diverges (resp.\ converges).
\end{customthm}
{See \cite{Schmidt80} for more details.} On the other hand, let $X=\SL_{m+l}(\R)/\SL_{m+l}(\Z)$ be the space of unimodular lattices in $\R^{m+l}$ and let $\Delta: X\to [0,\infty)$ be the function on $X$ 
\begin{equation}\label{delta}\Delta(\Lambda):=\sup_{\bm{v}\in \Lambda\ssm \{\bm{0}\}}\log\left(\frac{1}{\|\bm{v}\|}\right).\end{equation}
Note that $\Delta(\Lambda) \ge 0$ for any $\Lambda\in X$ due to Minkowski's Convex Body Theorem, and  for all $r\ge 0$ the sets 
\begin{equation}\label{kr}{K_r:= {\Delta^{-1}([0,r])}}\end{equation}
(of lattices such that all its nonzero vectors have supremum norm at least $e^{-r}$) are compact due to Mahler's Compactness Criterion,  see e.g.\ \cite{Cassels1997}. 
Following ideas of Dani \cite{Dani1985}, it was shown in \cite{KleinbockMargulis1999} that there exists a unique function $r=r_{\psi}: [s_0,\infty)\to \R$  depending on $\psi$ 
(this was referred to as the Dani Correspondence) such that $A\in M_{m,l}(\R)$ is $\psi$-approximable if and only if the events $a_s\Lambda_A\in \Delta^{-1}  \big(r(s),\infty\big)  $ happen for {an unbounded set of $s>s_0$}, where 
$$a_s=\diag (e^{s/m},\dots,e^{s/m},e^{-s/l},\dots, e^{-s/l})$$
{with $m$ copies of $e^{s/m}$ and $l$ copies of $e^{-s/l}$,} and $\Lambda_{A}=\begin{pmatrix}
I_m & A\\
 0 & I_l\end{pmatrix}\Z^{m+l}\in X$. 
%
This way {the first-named author} and Margulis showed 
Theorem \ref{KG} to be equivalent to a dynamical Borel-Cantelli lemma for the $a_s$-orbits making excursions into the 
cusp neighborhoods $\Delta^{-1} \big(r(s),\infty\big)_{s>{s_0}}$,  and used this to give an alternative dynamical proof of Theorem \ref{KG} based on mixing properties of the $a_s$-action on $X$, see \cite{KleinbockMargulis1999, KleinbockMargulis2018}.

\medskip
More recently, for a given $\psi$ as above, the first-named author and Wadleigh \cite{KleinbockWadleigh2018} studied the finer problem of improvements to Dirichlet's Theorem. {See \cite{DavenportSchmidt70a,DavenportSchmidt70b} for the history of the problem of improving Dirichlet's Theorem.} {Following the definition in \cite{KleinbockWadleigh2018}} an $m$ by $l$ real matrix $A$ is called \textit{$\psi$-Dirichlet} if the system of inequalities 
$$\|A\bm{q}-\bm{p}\|^m<\psi(t)\quad \textrm{and}\quad \|\bm{q}\|^l<t$$
has solutions in $(\bm{p},\bm{q})\in \Z^m\times (\Z^l\ssm \{\bm{0}\})$ for all sufficiently large $t$. 
Following the general scheme developed in \cite{KleinbockMargulis1999} they gave a dynamical interpretation of $\psi$-Dirichlet matrices. Namely, they showed that {$A\in M_{m,l}(\R)$} is not $\psi$-Dirichlet if and only if the events $$a_s\Lambda_A\in K_{r(s)}$$ happen for {an unbounded set of $s>s_0$}, where $a_s,\Lambda_A$ and $r=r_{\psi}$ are all as above. Hence in this case the family of shrinking targets is given by $\{K_{r(s)}\}_{s>{s_0}}$, and one is naturally interested in whether this family of shrinking targets is {BC} for the flow $\{a_s\}_{s > 0}$.

However this dynamical interpretation is not helpful when it comes to determining necessary and sufficient condition{s} on $\psi$ guaranteeing that almost every (almost no) $A$ is 
$\psi$-Dirichlet. One of the main difficulties is that the shrinking targets $K_{r(s)}$ are far away from being ${\SO_{m+l}(\R)}$-invariant, and thus when applying the mixing properties of the $a_s$-action it will involve certain Sobolev norms which are hard to control. Still, using a different method based on continued fractions the aforementioned conditions were found in \cite{KleinbockWadleigh2018} for the case $m=l=1$. Namely, the following was proved:
\begin{customthm}{KW}[Kleinbock-Wadleigh]\label{KW}  
Let $\psi: [t_0,\infty)\to {(0,\infty)}$ be a continuous, non-increasing function satisfying 
\begin{equation}\label{equ:con1psi}
\textrm{the function $t\mapsto t\psi(t)$ is non-decreasing}
\end{equation} 
and
\begin{equation}\label{equ:con2psi}
t\psi(t)<1\quad \textrm{for all $t\geq t_0$}.
\end{equation}
Then if the series
\begin{equation}\label{equ:zeroone}
\sum_{n}\frac{-\left(1-n\psi(n)\right)\log\left(1-n\psi(n)\right)}{n} 
\end{equation}
diverges (resp.\ converges), then  Lebesgue-a.e. $x\in\R$ is not (resp.\ is) $\psi$-Dirichlet.
\end{customthm}

In this paper we use the above theorem to derive a dynamical Borel-Cantelli lemma for the diagonal flow $a_s:=\diag (e^s,e^{-s})$ on $X:=\SL_2(\R)/\SL_2(\Z)$.
 Let 
$\mu$ be the probability Haar measure on $X$,
consider the function $\Delta$  on $X$ as in \eqref{delta}, and define the sets $K_r$  as in  \eqref{kr}. 

\smallskip
We now state our dynamical Borel-Cantelli lemma.
\begin{thm}\label{thm:dynamical01}
Let $r:[s_0,\infty)\to (0,\infty)$ be a continuous {and} non-increasing function. Let $B_s=K_{r(s)}$ and let ${B_{\infty}}=\limsup_{t\to\infty}a_{-s}B_s$. Then we have 
$$\sum_nr(n)\log\left(\frac{1}{r(n)}\right)<\infty\ \Longrightarrow\ \mu({B_{\infty}})=0.$$
If in addition we assume that the function $s\mapsto s+r(s)$ is non-decreasing, then we have
$$\sum_nr(n)\log\left(\frac{1}{r(n)}\right)=\infty\ \Longrightarrow\ \mu({B_{\infty}})=1.$$
\end{thm}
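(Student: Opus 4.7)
The plan is to treat the two implications separately, both relying on the volume asymptotic $\mu(K_r)\asymp r\log(1/r)$ as $r\to 0$, which is derived earlier in the paper via the second-moment formula for the Siegel transform of indicators of centered squares in $\R^2$.

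For the convergence direction, apply the easy half of Borel-Cantelli \eqref{equ:easy} to the discretization \eqref{equ:discrete}; it suffices to show $\mu(\widetilde{B}_n)\ll r(n)\log(1/r(n))$. Monotonicity of $r$ gives $\widetilde{B}_n\subset\bigcup_{s\in[0,1)}a_{-s}K_{r(n)}$, and the measure of this flow-thickening can be controlled by $O(\mu(K_{r(n)}))$ via either another Siegel-transform computation applied to the dual description $\widetilde{B}_n^c=\{\Lambda:\forall s\in[0,1),\,\Lambda\cap R_s\ne\{0\}\}$ with $R_s=\{|x|<e^{-s-r(n+s)},\,|y|<e^{s-r(n+s)}\}$, or by a covering argument using the transit-time structure of the geodesic flow through $K_{r(n)}$. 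Summing then yields $\mu(B_\infty)=0$.

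For the divergence direction, combine the Dani correspondence with Theorem~\ref{KW} and a transfer to Haar measure. First, unpack $a_s\Lambda_A\notin K_{r(s)}$ as the existence of $(p,q)\in\Z\times(\Z\ssm\{0\})$ with $|qA-p|<e^{-(s+r(s))}$ and $|q|<e^{s-r(s)}$; set $t=e^{s-r(s)}$ and $\psi(t)=e^{-(s+r(s))}$. The hypothesis that $s+r(s)$ is non-decreasing, combined with $r$ non-increasing (hence $s-r(s)$ also non-decreasing), ensures $t$ and $\psi$ are well-defined continuous monotone functions with $t\psi(t)=e^{-2r(s(t))}\in(0,1)$ non-decreasing, so conditions \eqref{equ:con1psi}--\eqref{equ:con2psi} hold and $\Lambda_A\in B_\infty$ iff $A$ is not $\psi$-Dirichlet. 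Next, a routine calculation using $1-t\psi(t)\asymp r(s(t))$ and $dt/t\asymp ds$ (up to a bounded correction from $r'$) shows
\[
\sum_n\frac{-(1-n\psi(n))\log(1-n\psi(n))}{n}\;\asymp\;\sum_n r(n)\log\frac{1}{r(n)},
\]
so Theorem~\ref{KW} gives $\Lambda_A\in B_\infty$ for Lebesgue-a.e.\ $A\in\R$. Finally, $r$ non-increasing gives $a_{s_0}B_\infty\subset B_\infty$ for every $s_0>0$, so $B_\infty$ is $\{a_s\}$-invariant modulo $\mu$-null sets, and by ergodicity of $\{a_s\}$, $\mu(B_\infty)\in\{0,1\}$; combining the horocycle statement with equidistribution of closed horocycles $a_s(\{\Lambda_A:A\in[0,1)\})$ in $(X,\mu)$ as $s\to\infty$ and the quasi-invariance of $B_\infty$ yields $\mu(B_\infty)>0$, hence $=1$.

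The main obstacle is this last step: transferring the Lebesgue-a.e.\ statement on a single $\mu$-null closed horocycle to a $\mu$-a.e.\ statement on $X$. Because $B_\infty$ is a limsup of Borel sets, generally neither open nor closed, direct weak-$*$ portmanteau arguments against $\mathbf{1}_{B_\infty}$ do not apply, and the approximation must be carried out using the volume asymptotic $\mu(K_r)\asymp r\log(1/r)$ carefully so as not to lose the sharp log-factor in the transfer.
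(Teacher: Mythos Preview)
Your convergence outline is the paper's, but you misstate the volume: Theorem~\ref{thm:measure} gives $\mu(K_r)\asymp r^2\log(1/r)$, not $r\log(1/r)$, so the flow-thickening is larger than $\mu(K_r)$ by a factor $\asymp 1/r$ (Theorem~\ref{thm:thickening}), not $O(\mu(K_r))$. The bound $\mu(\widetilde B_n)\ll r(n)\log(1/r(n))$ comes from covering $[0,1)$ by $\sim 1/r(n)$ subintervals and using $a_{-t}K_r\subset K_{r+|t|}$ to trap the thickening in $\sim 1/r(n)$ translates of $K_{2r(n)}$; your ``covering argument'' is the right idea, but the claim that the thickening is $O(\mu(K_r))$ is false.

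The real gap is the divergence transfer. Your reduction to Theorem~\ref{KW} via the Dani correspondence and the series comparison are correct and match the paper (Lemmas~\ref{lem:dani}--\ref{lem:hitting}). But passing from ``Lebesgue-a.e.\ $x$ has $\Lambda_x\in B_\infty$'' to ``$\mu(B_\infty)>0$'' via horocycle equidistribution cannot work as stated: $B_\infty$ is a bare Borel limsup set, and a single $\mu$-null leaf carrying full leaf-measure in $B_\infty$ says nothing about $\mu(B_\infty)$ without regularity of $\partial B_\infty$, which you have no control over. The paper avoids equidistribution entirely and uses a Fubini argument over the decomposition $\Lambda=\left(\begin{smallmatrix}a&0\\b&a^{-1}\end{smallmatrix}\right)\Lambda_x$, which parametrizes a conull subset of $X$. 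From $a_s\Lambda=u^-_{e^{-2s}a^{-1}b}\,a_{s+\log a}\Lambda_x$ and $|\Delta(u^-_y\cdot)-\Delta(\cdot)|\le\log(1+|y|)\ll e^{-2s}$, one sees that to get $a_s\Lambda\in K_{r(s)}$ for all $(a,b)$ one needs, for a.e.\ $x$, that $a_s\Lambda_x$ enters the \emph{shrunken} targets $K_{r(s+c)-\lambda e^{-2(s+c)}}$ for an unbounded set of $s$, for every $c\in\R$ and $\lambda>0$. This is the paper's Lemma~\ref{lem:trick}: apply Proposition~\ref{prop:zeroone} once to $\tfrac12 r_c$ (the series still diverges) and once to $s\mapsto\lambda e^{-2(s+c)}$ (the series converges, so eventually $\Delta(a_s\Lambda_x)>\lambda e^{-2(s+c)}$), then combine. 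The idea you are missing is to build this exponential slack into the targets \emph{before} invoking Theorem~\ref{KW}, rather than trying to approximate $B_\infty$ afterward.
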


Comparing the statement of the above theorem with \eqref{equ:easy}, one can guess that it can be  approached by studying the thickenings
 \begin{equation}\label{equ:newthickenings}
\widetilde{B}_n=\bigcup_{0\leq s<1}a_{-s}B_{n+s} = \bigcup_{0\leq s<1}a_{-s}K_{r(n+s)}
\end{equation}
 as in \eqref{equ:thickenings}.
We do it in several steps. In the beginning of \S\ref{thickening} we prove an asymptotic measure formula for the 
sets $K_{r}$ where $r$ is small:

\begin{thm}\label{thm:measure}
For any $0<r<\frac12\log 2$ we have
$$\mu\left(K_{r}\right)=\frac{4r^2\log\left(\frac{1}{r}\right)}{\zeta(2)}+O(r^2),$$
where $\zeta(2)=\frac{\pi^2}{6}$ is the value of the Riemann zeta function at $2$.
\end{thm}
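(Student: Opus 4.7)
The plan is to parametrize lattices $\Lambda \in K_r$ by their (essentially unique) shortest primitive vector and evaluate $\mu(K_r)$ via an unfolding of Siegel's mean value formula for primitive vectors, with the second moment formula for the primitive Siegel transform handling the technical error terms.

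Set $A_r := \{\bm x \in \R^2 : e^{-r} \le \|\bm x\|_\infty < 1\}$. For $\mu$-almost every $\Lambda$ the shortest primitive vector is unique up to sign, and $\Lambda \in K_r$ iff this vector lies in $A_r$. The primitive version of Siegel's mean value formula, together with an unfolding that parametrizes pairs $(\bm v, \Lambda)$ with $\bm v \in \Lambda_{\mathrm{prim}}$ by a coset of the $\SL_2(\Z)$-stabilizer of $\bm e_1$, then yields
\[
\mu(K_r) \;=\; \frac{1}{2\zeta(2)}\,\int_{A_r} \int_{\mathrm{fib}(\bm v)} \mathbf{1}\{\bm v \text{ is a shortest primitive vector of } \Lambda_{\bm v}(t)\}\,dt\,d\bm v,
\]
where $\mathrm{fib}(\bm v) \cong \R/\Z$ parametrizes the second basis vector $\bm w$ (with $\det(\bm v, \bm w) = 1$) modulo $\Z\bm v$, and the factor of $\tfrac12$ accounts for $\bm v \sim -\bm v$.

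Using the dihedral symmetry of $A_r$ (a factor of $4$ for signs of $\bm v$ and swap of coordinates), I restrict to $\bm v = (\ell, \alpha)$ with $\ell := \|\bm v\|_\infty \in [e^{-r}, 1)$ and $|\alpha| \le \ell$. Writing $\bm w = (w_1, w_2)$ with $\det(\bm v, \bm w) = 1$, so that $w_2 = (1 + \alpha w_1)/\ell$, and taking $w_1 \in [0, \ell)$ as the fiber variable, the shortness of $\bm v$ in $\Lambda = \Z\bm v + \Z\bm w$ reduces, for $\ell$ near $1$, to the two dominant inequalities $|1 + \alpha w_1| \ge \ell^2$ (from $\bm w$) and $|1 + \alpha(w_1 - \ell)| \ge \ell^2$ (from $\bm w - \bm v$). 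Setting $\epsilon := 1 - \ell$ and splitting by the sign of $\alpha$ and by whether $w_1$ lies in $[0, \ell/2]$ or $[\ell/2, \ell)$, a direct computation shows
\[
\int_{-\ell}^{\ell}\int_0^\ell \mathbf{1}[\cdot]\,dw_1\,d\alpha \;=\; 4\epsilon \log(1/\epsilon) + O(\epsilon).
\]
Integrating over $\ell \in [e^{-r}, 1)$, equivalently $\epsilon \in (0, 1 - e^{-r}]$, and using $\int_0^r \epsilon \log(1/\epsilon)\,d\epsilon = \tfrac12 r^2\log(1/r) + O(r^2)$ together with the prefactor $\frac{4}{2\zeta(2)} = \frac{2}{\zeta(2)}$ and the Jacobian $1/\ell = 1 + O(r)$, yields
\[
\mu(K_r) \;=\; \frac{8}{\zeta(2)}\int_0^r \epsilon \log(1/\epsilon)\,d\epsilon + O(r^2) \;=\; \frac{4 r^2 \log(1/r)}{\zeta(2)} + O(r^2).
\]

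The main obstacle is rigorously verifying that the two dominant shortness constraints above determine the shortness region up to $O(r^2)$ error: one must show that the conditions arising from higher-order lattice vectors $\bm w + k\bm v$ with $|k| \ge 2$ and $m\bm v + n\bm w$ with $|n| \ge 2$ are automatically satisfied (or fail on a set of $(\alpha, w_1)$ of measure $O(\epsilon^2)$), and that the $\mu$-measure of non-generic lattices where the shortest primitive vector is not unique is itself $O(r^2)$. Both estimates reduce to bounds on the expected number of multiple primitive pairs of $\Lambda$ contained in the square $S_r := (-e^{-r}, e^{-r})^2$, and these are precisely what the explicit second moment formula for the primitive Siegel transform of $\chi_{S_r}$ provides. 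The hypothesis $r < \frac12\log 2$ enters here: it forces $|\det(\bm v_1, \bm v_2)| \le 1$ for any two distinct primitive pairs of $\Lambda$ both in $S_r$, so the relevant Rogers-type sum collapses to a single tractable index-$1$ stratum and the desired error bound follows.
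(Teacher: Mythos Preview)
Your approach is correct and genuinely different from the paper's. The paper never parametrizes $K_r$ by the shortest vector; instead it observes that $\hat f_r$ (the primitive Siegel transform of $\chi_{\cS_r}$) takes only the values $0$, $2$, $4$, writes $\mu(B_r^0)+\mu(B_r^1)+\mu(B_r^2)=1$, $\mu(B_r^1)+2\mu(B_r^2)=\tfrac12\|\hat f_r\|_1$, $4\mu(B_r^1)+16\mu(B_r^2)=\|\hat f_r\|_2^2$, and solves this $3\times 3$ linear system. The explicit second moment formula (Theorem~\ref{thm:secondmoment}) then gives a closed form for $\mu(K_r)=\mu(B_r^0)$, whose Taylor expansion yields the result. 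So for the paper the second moment formula is the \emph{entire} engine, not an error-control device.

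Your route---unfold over the shortest primitive vector and integrate the fiber condition directly---works, and in fact is more elementary than you claim. The ``main obstacle'' you flag dissolves on inspection: writing a general lattice vector as $nw+mv$ and noting that its second coordinate equals $\big(n+\alpha\cdot(\text{first coordinate})\big)/\ell$, one sees that whenever the first coordinate has modulus $<\ell$ and $|n|\ge 2$, the second coordinate has modulus $>(|n|-1)/\ell\ge 1>\ell$. Thus the conditions from $|n|\ge 2$ are \emph{automatically} satisfied, not merely satisfied up to $O(\epsilon^2)$; and the non-uniqueness locus of the shortest vector has measure zero, not merely $O(r^2)$. Consequently your computation is exact and the second moment formula is never needed---the hypothesis $r<\tfrac12\log 2$ plays no role in your argument either. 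What each approach buys: the paper's moment method is cleaner and packages everything into a single identity that also yields $\|\hat f_r\|_2^2$ as a byproduct; your direct integration is more hands-on but self-contained and avoids the machinery of Theorem~\ref{thm:secondmomentgeneral}.
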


Here and hereafter for two positive quantities $A$ and $B$, we will use the notation $A\ll B$  {or $A=O(B)$} to mean that there is a constant $c>0$ such that $A\leq cB$, and we will use subscripts to indicate the dependence of the constant on parameters. We will write $A\asymp B$ for $A\ll B\ll A$.

The next step is to use Theorem \ref{thm:measure} to estimate the measure of the thickening of $K_r$ along the flow $\{{a_{-s}}\}_{0\leq s<1}$ by bounding it from above and below by a finite union of $a_s$-translates of 
$K_{r}$. This is also done in  \S\ref{thickening} and yields the following result:

\begin{thm}\label{thm:thickening}
For any $0<r<\log 1.01$ we have
\begin{equation*}\label{equ:asyfor}
\mu\left(\bigcup_{0\leq s<1}a_{-s}K_{r}\right)\asymp r\log\left(\frac{1}{r}\right).
\end{equation*}
\end{thm}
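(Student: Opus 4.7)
My approach is to bracket $U := \bigcup_{0\le s<1}a_{-s}K_r$ between finite unions of $a_s$-translates of $K_{r'}$ with $r'$ close to $r$, and then invoke Theorem \ref{thm:measure}.

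For the upper bound, I would use the elementary inclusion $a_{-s}K_r\subset K_{r+s}$ valid for $s\ge 0$, which follows from $\|a_{-s}v\|\le e^{s}\|v\|$: if every nonzero $v\in a_s\Lambda$ has sup-norm $\ge e^{-r}$, then every nonzero vector of $\Lambda$ has sup-norm $\ge e^{-r-s}$. Subdividing $[0,1)$ into $N:=\lceil 1/r\rceil$ subintervals of length $\le r$ gives $U \subset \bigcup_{i=0}^{N-1}a_{-ir}K_{2r}$, so by $a_s$-invariance of $\mu$ and Theorem \ref{thm:measure}, $\mu(U)\le N\mu(K_{2r})\ll r\log(1/r)$.

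For the lower bound, I would introduce the visit-time function $F(\Lambda):=\int_0^1\chi_{K_r}(a_s\Lambda)\,ds$, which is supported on $U$. By Fubini and $a_s$-invariance, $\int_X F\,d\mu=\mu(K_r)$. Applying Cauchy--Schwarz to the factorization $F = F\chi_U$ yields
\[
\mu(U)\;\ge\;\frac{\mu(K_r)^2}{\int_X F^2\,d\mu}\;=\;\frac{\mu(K_r)^2}{2\int_0^1(1-u)\,\mu(K_r\cap a_u K_r)\,du},
\]
so it suffices to establish the second-moment estimate $\int_0^1\mu(K_r\cap a_u K_r)\,du \ll r\,\mu(K_r)$, which by Theorem \ref{thm:measure} yields $\mu(U)\gg\mu(K_r)/r\asymp r\log(1/r)$.

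The near-diagonal range $u\in[0,r]$ is handled trivially by $\mu(K_r\cap a_u K_r)\le \mu(K_r)$. The hard part will be the far-diagonal decorrelation $\mu(K_r\cap a_u K_r)\ll \mu(K_r)^2$ for $u\in[r,1]$ (which is sufficient since $\mu(K_r)^2\ll r\mu(K_r)$ as $r\to 0$). To this end, I would use the identification $K_r\cap a_u K_r = \{\Lambda : \Lambda\cap(B\cup a_u B)=\{0\}\}$ for the square $B:=(-e^{-r},e^{-r})^2$, and perform a Siegel second-moment computation for the indicator of the non-convex ``cross'' $B\cup a_u B$, analogous to the computation for the single square $B$ that yields Theorem \ref{thm:measure}.
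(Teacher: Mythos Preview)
Your upper bound argument is correct and matches the paper's exactly.

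For the lower bound your approach diverges from the paper's, and there is a real gap. The Cauchy--Schwarz reduction to the correlation integral $\int_0^1\mu(K_r\cap a_uK_r)\,du$ is valid, but the remaining steps do not go through as written.

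First, the decorrelation estimate $\mu(K_r\cap a_uK_r)\ll\mu(K_r)^2$ is almost certainly false for $u$ just above $r$: at $u=0$ the intersection is $K_r$ itself, and there is no mechanism forcing its measure to drop to order $\mu(K_r)^2\asymp r^4(\log\tfrac1r)^2$ already at $u=r$. This particular issue is minor (move the split point to $Cr$ for a larger constant $C$), but it signals that the transition is governed by a scale you have not identified.

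More seriously, the proposed Siegel second-moment computation for the cross $B\cup a_uB$ will not by itself yield $\mu(K_r\cap a_uK_r)$. The reason Theorem~\ref{thm:measure} works for the single square $\cS_r$ is that $\hat f_r$ takes only the three values $0,2,4$; the three equations (total mass, first moment via \eqref{Siegel}, second moment via Theorem~\ref{thm:secondmoment}) then determine $\mu(\{\hat f_r=0\})=\mu(K_r)$ exactly. For the cross one only has $\hat\chi_{B\cup a_uB}\le\hat\chi_B+\hat\chi_{a_uB}\le 8$, so $\hat\chi_{B\cup a_uB}$ ranges a priori over $\{0,2,4,6,8\}$. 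The resulting $3\times 5$ linear system is underdetermined, and a second-moment formula alone does not pin down (or even usefully upper-bound) $\mu(\{\hat\chi_{B\cup a_uB}=0\})$. You would need higher moments, or some additional inequality, to extract the required bound.

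The paper sidesteps all of this. It proves a purely geometric statement, Lemma~\ref{lem:trivialint} (which in turn rests on Lemma~\ref{lem:nbhd}), asserting that $a_uK_r\cap K_r=\varnothing$ whenever $6r\le u\le\log 1.9$. With this in hand one places $\asymp 1/r$ translates $a_{-i/N}K_r$ (spacing $1/N\ge 6r$, indices $i/N<\log 1.9<1$) inside $\bigcup_{0\le s<1}a_{-s}K_r$; these translates are pairwise disjoint, so their measures add, giving $\gg(1/r)\,\mu(K_r)\asymp r\log(1/r)$ directly. In particular the estimate your Cauchy--Schwarz argument aims for, $\int_0^1\mu(K_r\cap a_uK_r)\,du\ll r\,\mu(K_r)$, is an immediate consequence of this exact disjointness on $[6r,\log 1.9]$ together with the trivial bound on $[0,6r]$, and no second-moment computation for the cross is needed.
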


 The above asymptotic equality shows that the series appearing in Theorem \ref{thm:dynamical01} converges/diverges iff so does the series  $\sum_n\mu(\widetilde{B}_n)$, where  $\widetilde{B}_n$ is as in \eqref{equ:newthickenings}:
 
 \begin{cor}\label{cor:series}
Let $r:[s_0,\infty)\to (0,\infty)$ be a 
non-increasing function, and let $\widetilde{B}_n$ be as in \eqref{equ:newthickenings}. Then we have
$$\sum_n\mu(\widetilde{B}_n)=\infty\ \textrm{ if and only if }\ \sum_nr(n)\log\left(\frac{1}{r(n)}\right)=\infty.$$
\end{cor}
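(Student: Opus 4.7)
The plan is to sandwich $\widetilde{B}_n$ between two sets of the form appearing in Theorem~\ref{thm:thickening} and then read off the conclusion. Two monotonicity observations drive the argument: (i) since $r(\cdot)$ is non-increasing, we have $r(n+1) \leq r(n+s) \leq r(n)$ for every $s \in [0,1)$; (ii) by the definition \eqref{kr}, the family $\{K_r\}$ is clearly non-decreasing in $r$. Combining the two gives $K_{r(n+1)} \subseteq K_{r(n+s)} \subseteq K_{r(n)}$ for all such $s$.

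Applying $a_{-s}$ to this chain of inclusions and then taking the union over $s \in [0,1)$ immediately yields the sandwich
\[
\bigcup_{0 \leq s < 1} a_{-s} K_{r(n+1)} \ \subseteq\ \widetilde{B}_n \ \subseteq\ \bigcup_{0 \leq s < 1} a_{-s} K_{r(n)}.
\]
The next step is to invoke Theorem~\ref{thm:thickening} on both ends of the sandwich. This is permissible as soon as $r(n) < \log 1.01$, which for the equivalence of interest may be arranged by assuming $r(n)\to 0$ (if $r$ has a strictly positive lower limit $c$, then $\widetilde{B}_n \supseteq K_c$ has measure bounded away from $0$, and the two series in the statement are easily seen to both diverge, so nothing needs to be proved). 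Under this assumption the sandwich becomes
\[
r(n+1) \log\!\left(\tfrac{1}{r(n+1)}\right) \ \ll\ \mu(\widetilde{B}_n) \ \ll\ r(n) \log\!\left(\tfrac{1}{r(n)}\right)
\]
for all sufficiently large $n$.

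Summing on $n$ and noting that the shifted series $\sum_n r(n+1)\log(1/r(n+1))$ has exactly the same convergence behavior as $\sum_n r(n)\log(1/r(n))$ (they differ only in an initial term) then yields the desired equivalence. No substantial obstacle is expected: once Theorem~\ref{thm:thickening} is in hand the corollary is essentially a direct consequence, the only subtlety being the trivial bookkeeping for the edge case $r(n)\not\to 0$.
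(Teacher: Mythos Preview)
Your proposal is correct and follows essentially the same approach as the paper's proof: both dispose of the case $r(n)\not\to 0$ first, then use the monotonicity sandwich $\bigcup_{0\le s<1}a_{-s}K_{r(n+1)}\subseteq \widetilde B_n\subseteq \bigcup_{0\le s<1}a_{-s}K_{r(n)}$ and apply Theorem~\ref{thm:thickening} on each side. Your write-up is in fact slightly more explicit than the paper's in justifying the sandwich and in noting that the shift $n\mapsto n+1$ is harmless for convergence.
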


Therefore, in view of \eqref{equ:discrete} and \eqref{equ:easy}, the convergence part of Theorem \ref{thm:dynamical01} is immediate from the Borel-Cantelli lemma. The divergence part however is trickier. Instead of using a dynamical approach as in \cite{KleinbockMargulis1999}, our proof  in \S\ref{sec:shrinkingtargets} is non-dynamical and relies 
on Theorem \ref{KW} and the Dani Correspondence.

 \medskip
 

It remains to comment on our proof of Theorem \ref{thm:measure}. Instead of trying to describe the sets 
$K_{r}$ explicitly in terms of coordinates and compute their measures directly, we adapt an indirect approach which relies 
on an explicit second moment formula of the Siegel transform of certain indicator functions. Recall that if $f$ is a function on $\R^2$, its \textit{primitive Siegel transform} is the function on $X$ given by 
 $$\hat{f}(\Lambda):=\sum_{\bm{v}\in \Lambda_{\rm pr}}f(\bm{v}),$$
 where $\Lambda_{\rm pr}$ is the set of primitive vectors of $\Lambda$. Clearly $\hat{f}(\Lambda) = \#(\Lambda_{\rm pr}\,\cap\, {\cS})$ when $f$ is the indicator function of a subset ${\cS}$ of $\R^2$.
 
 Let us briefly describe the history of the problem. The Siegel transform was originally defined by Siegel \cite{Siegel1945} as the sum over all nonzero lattice point for unimodular lattices of any rank. In the same paper Siegel proved a Mean Value Theorem for the Siegel transform, which in the primitive set-up amounts to 
\begin{equation}\label{Siegel}
\int_X\hat{f}\left(\Lambda\right)\,d\mu\left(\Lambda\right) = \frac{1}{\zeta(2)}\int_{\R^2}f\left(\bm{x}\right)\,d\bm{x}.
\end{equation}
for any bounded compactly supported $f$ on $\R^2$.
Since then there has been much work extending his result to higher moments. For example, in \cite{Rogers1955} Rogers proved a series of higher moment formulas, which in particular includes a second moment formula for the Siegel transform defined on the space of unimodular lattices of rank greater than $2$. However, his result did not give a second moment formula on $X$ as in our setting. For this setting, Schmidt \cite{Schmidt1960} proved an upper bound for the second moment of the primitive Siegel transform of indicator functions on $\R^2$. His bound was later logarithmically improved by Randol \cite{Randol1970} for discs centered at the origin and by Athreya and Margulis \cite{AthreyaMargulis09} for general indicator functions building on Randol's bound. Athreya and Konstantoulas \cite{AthreyaKonstantoulas2016} obtained similar bounds on the space of general symplectic lattices for certain family of indicator functions.  Continuing \cite{AthreyaKonstantoulas2016}, Kelmer and the second-named author \cite{KelmerYu2018} proved a second moment formula on the space of symplectic lattices $Y_n:=\Sp(2n,\R)/\Sp(2n,\Z)$. In particular, when $n=1$ we have $Y_1=X$ and their formula also applies to our setting\footnote{See also \cite{Fairchild19} for moment formulas of the Siegel-Veech transform recently obtained by Fairchild.}.
However, for 
our applications all these formulas are not  explicit enough.
 
 \medskip
 
 We now state an explicit second moment formula which we use to derive Theorem \ref{thm:measure}.
\begin{thm}\label{thm:secondmoment}
For any $ r\geq 0$ let ${\cS_r}$ be the open square with vertices given by $(\pm e^{-r},\pm e^{-r})$, and let $f_r$ be the indicator function of ${\cS_r}$.  {Then we have}
\begin{equation}\label{implicit}
\|\hat{f}_r\|_2^2\,=\frac{8}{\zeta(2)}\left(e^{-2r}+\int_{\cD_r}\Big(\frac{e^{-r}}{x_1}+\frac{e^{-r}}{x_2}-\frac{1}{x_1x_2}\Big)\,dx_1dx_2\right),
\end{equation}
where $$\cD_r:=\big\{\bm{x}=(x_1,x_2)\in {\cS_r}\ \bigm| x_1>0, \ x_2>0,\  x_1+x_2>e^r\big\},$$
and $\|\cdot\|_2$ stands for the $L^2$-norm 
with respect to $\mu$.
\end{thm}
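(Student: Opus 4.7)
The plan is to compute $\|\hat{f}_r\|_2^2$ by expanding the square and decomposing pairs of primitive lattice vectors into $\SL_2(\Z)$-orbits, in the spirit of Rogers \cite{Rogers1955} and Schmidt \cite{Schmidt1960}. Write
\[
\|\hat f_r\|_2^2 \,=\, \int_X \sum_{\bm{v}_1, \bm{v}_2 \in \Lambda_{\rm pr}} f_r(\bm{v}_1)\, f_r(\bm{v}_2)\, d\mu(\Lambda)
\]
and split the double sum into the \emph{diagonal} part ($\bm v_2 = \pm \bm v_1$) and the \emph{off-diagonal} part (which is automatically linearly independent, since two parallel primitive vectors in $\Lambda$ differ by a sign). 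Using $f_r^2 = f_r$, $f_r(-\bm x) = f_r(\bm x)$, and Siegel's formula \eqref{Siegel}, the diagonal contributes $2 \int_X \hat f_r\, d\mu = (2/\zeta(2))\int_{\R^2} f_r = 8e^{-2r}/\zeta(2)$, which is the first summand in \eqref{implicit}.

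For the off-diagonal part $I_{\mathrm{off}}$, I would first classify the $\SL_2(\Z)$-orbits on pairs of linearly independent primitive vectors in $\Z^2$: using transitivity of $\SL_2(\Z)$ on primitive vectors (to normalize $\bm v_1 = \bm e_1$) together with the stabilizer of $\bm e_1$ (the upper unipotent subgroup over $\Z$), every orbit has a unique representative $(\bm e_1,\, p\bm e_1 + d\bm e_2)$ with $d \in \Z \setminus \{0\}$ and $p \in (\Z/|d|\Z)^*$, and trivial stabilizer. Unfolding and lifting to $\SL_2(\R)$ gives
\[
I_{\mathrm{off}} \,=\, \frac{1}{\zeta(2)} \sum_{d \neq 0} \sum_{p} \int_{\SL_2(\R)} f_r(g \bm e_1)\, f_r\big(g(p \bm e_1 + d \bm e_2)\big)\, d\sigma(g),
\]
where $\sigma$ is the Haar measure on $\SL_2(\R)$ normalized by $\sigma(X) = \zeta(2)$, which can be written as $d\sigma = \delta(\det g - 1)\, dg_{11}\, dg_{12}\, dg_{21}\, dg_{22}$. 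Changing variables to $(\bm w_1, \bm w_2) := (g\bm e_1, g(p\bm e_1 + d\bm e_2))$---whose Jacobian equals $d^2$---yields
\[
\int_{\SL_2(\R)} f_r(g\bm e_1)\, f_r(g(p\bm e_1 + d\bm e_2))\, d\sigma \,=\, |d|^{-1} \int_{(\R^2)^2} f_r(\bm w_1)\, f_r(\bm w_2)\, \delta\big(\det(\bm w_1, \bm w_2) - d\big)\, d\bm w_1\, d\bm w_2,
\]
and the $p$-sum produces the Euler totient $\phi(|d|)$. A key simplification is that for $\bm w_1, \bm w_2 \in \cS_r$ we have $|\det(\bm w_1, \bm w_2)| < 2 e^{-2r}$, so only $d = \pm 1$ can contribute, and only when $r < \tfrac12 \log 2$ (otherwise $I_{\mathrm{off}} = 0$, consistently with $\cD_r = \emptyset$). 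The $d = \pm 1$ terms are equal under the swap $\bm w_1 \leftrightarrow \bm w_2$, yielding
\[
I_{\mathrm{off}} \,=\, \frac{2}{\zeta(2)} \int_{\cS_r \times \cS_r} \delta\big(\det(\bm w_1, \bm w_2) - 1\big)\, d\bm w_1\, d\bm w_2.
\]

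Finally, for $(\bm w_1, \bm w_2) \in \cS_r \times \cS_r$ with $x_1 y_2 - x_2 y_1 = 1$, the bound $|x_i|, |y_i| < e^{-r}$ forces $x_1 y_2 > 0$ and $x_2 y_1 < 0$, leaving four equally contributing sign sub-cases. Treating the representative case $x_1, x_2, y_2 > 0$, $y_1 < 0$: integrating out $y_2$ against the $\delta$ introduces a factor $1/x_1$; the constraint $y_2 < e^{-r}$ becomes $x_1 e^{-r} + x_2 |y_1| > 1$; integrating over $|y_1| \in (0, e^{-r})$ then yields $((x_1+x_2)e^{-r} - 1)/x_2$ provided $(x_1, x_2) \in \cD_r$ and zero otherwise. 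Summing the four sign cases and multiplying by $2/\zeta(2)$ produces the second summand in \eqref{implicit}. The main obstacle is less conceptual than computational: both the Jacobian computation in the unfolding step and the sign- and boundary-case bookkeeping in the last step must be carried out carefully.
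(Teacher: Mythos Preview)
Your argument is correct, and the two approaches are close cousins but packaged differently. The paper does not unfold over pairs directly: it first proves the general second moment formula (Theorem~\ref{thm:secondmomentgeneral}) by applying the Iwasawa-based identity of Lemma~\ref{unfolding} with $F=\hat f$, expanding the inner $\hat f(k_\theta a_s u_t\Z^2)$ as a sum over primitive $(m,n)$, and summing over $m$ in each residue class to produce the terms $\tfrac{\varphi(|n|)}{|n|}\int_{\cS}|\cI_{\bm x}^n|\,d\bm x$. Theorem~\ref{thm:secondmoment} is then obtained by specialising to $\cS_r$, checking that $\cI_{\bm x}^n=\varnothing$ for $|n|\ge 2$, and computing $|\cI_{\bm x}^1|$ explicitly on $\cD_r$.

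Your route instead classifies $\SL_2(\Z)$-orbits on linearly independent primitive pairs and uses the representation $d\sigma=\delta(\det g-1)\,dg$ of Haar measure. The two are equivalent: your term $\tfrac{\varphi(|d|)}{|d|}\int f(\bm w_1)f(\bm w_2)\,\delta(\det(\bm w_1,\bm w_2)-d)\,d\bm w_1\,d\bm w_2$ becomes the paper's $\tfrac{\varphi(|d|)}{|d|}\int_{\cS}|\cI_{\bm x}^d|\,d\bm x$ after writing $\bm w_2=\alpha\bm w_1^{\perp}+\beta\bm w_1$ and integrating out $\alpha$ against the delta. What your presentation buys is directness for this particular $\cS_r$ (no detour through a general formula, no explicit Iwasawa coordinates); what the paper's buys is the general identity~\eqref{equ:general} valid for arbitrary bounded symmetric $\cS$, which it also uses to treat discs in Corollary~\ref{cor:disc}. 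One small wording point: after you pick up the factor $1/x_1$ from the $y_2$-integration, the $|y_1|$-integral gives $\big((x_1+x_2)e^{-r}-1\big)/(x_1x_2)=\tfrac{e^{-r}}{x_1}+\tfrac{e^{-r}}{x_2}-\tfrac{1}{x_1x_2}$, so make sure that $1/x_1$ is carried through when you write out the final step.
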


\begin{rmk}
When $r\geq \frac12\log 2$ the region $\cD_r$ is empty, and equation \eqref{implicit} simply reads as $\|\hat{f}_r\|_2^2\,=\frac{8e^{-2r}}{\zeta(2)}$. We note that the latter equality in fact already follows from Siegel's Mean Value Theorem, {since in this case for any unimodular lattice there can only be at most one pair of primitive lattice points allowed in ${\cS_r}$}, which implies that $\frac12\hat{f}_r$ is an indicator function on $X$. When $0\leq r<\frac12\log 2$, the region $\cD_r$ is not empty, and it is not hard to compute the integral in \eqref{implicit} explicitly, see \eqref{equ:explicit2} below. In particular, plugging $r=0$ into \eqref{implicit} we have $\|\hat{f}_0\|_2^2=\left(\frac{12}{\pi}\right)^2-8\approx 6.59$.
\end{rmk}

In \S\ref{2moment} we prove  a much more general second moment formula, see Theorem \ref{thm:secondmomentgeneral}, with an arbitrary bounded measurable subset $\cS$ of $\R^2$ in place of ${\cS_r}$. Theorem \ref{thm:secondmoment} is derived from Theorem \ref{thm:secondmomentgeneral} by taking $\cS = {\cS_r}$.

\subsection*{Acknowledgements}
The authors would like to thank Anurag Rao, Nick Wadleigh and Cheng Zheng  for many helpful conversations. Thanks are also due to the anonymous referee for a quick and careful report.

\section{The second moment formula}\label{2moment}
In this section, we 
prove Theorem \ref{thm:secondmoment} by establishing the following second moment formula for quite general subsets of $\R^2$.
{\begin{thm}\label{thm:secondmomentgeneral}
Let ${{\mathcal{S}}}$ be a measurable bounded subset of $\R^2$, and let $f$ be the indicator function of ${{\mathcal{S}}}$. Let ${{\widetilde{\mathcal{S}}}}=\left\{\bm{x}\in \R^2\ \left|\ -\bm{x}\in {{\mathcal{S}}}\right.\right\}$. Then we have
$$\|\hat{f}\|_2^2=\frac{1}{\zeta(2)}\left(\area({{\mathcal{S}}})+\area({{\mathcal{S}}}\cap {{\widetilde{\mathcal{S}}}})+\sum_{n\neq 0}\frac{\varphi(|n|)}{|n|}\int_{{{\mathcal{S}}}}\left|\cI_{\bm{x}}^n\right|d\bm{x}\right),$$
where $\varphi$ is the Euler's totient function, $\cI_{\bm{x}}^n\subset \R$ is defined by
$$\cI_{\bm{x}}^n:=\left\{t\in \R\ \left|\ n\left(\frac{-x_2}{x_1^2+x_2^2},\frac{x_1}{x_1^2+x_2^2}\right)+{t}(x_1,x_2)\in {{\mathcal{S}}}\right.\right\},$$
and $|\cI_{\bm{x}}^n|$ is the length of $\cI_{\bm{x}}^n$ with respect to the Lebesgue measure on $\R$.  
\end{thm}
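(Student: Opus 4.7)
The plan is to expand
\begin{equation*}
\|\hat{f}\|_2^2 = \int_X \sum_{v, w \in \Lambda_{\rm pr}} f(v) f(w) \, d\mu(\Lambda)
\end{equation*}
and decompose the inner double sum according to the geometric relation between $v$ and $w$. In rank two, a pair of primitive vectors is either parallel (so $w = \pm v$) or linearly independent. The parallel cases account for the first two summands in the stated formula: since $f$ is an indicator, $f(v)^2 = f(v)$, so $w = v$ contributes $\int_X \hat{f} \, d\mu = \area(\mathcal{S})/\zeta(2)$ via the primitive form of \eqref{Siegel}; likewise $f(v) f(-v) = \mathbf{1}_{\mathcal{S} \cap \widetilde{\mathcal{S}}}(v)$, giving $\area(\mathcal{S} \cap \widetilde{\mathcal{S}})/\zeta(2)$.

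For linearly independent pairs I would stratify by the integer $n := \det(v, w) \in \Z \setminus \{0\}$, which up to sign equals the index $[\Lambda : \Z v + \Z w]$. For a fixed primitive $v \in \Lambda$, completing it to a lattice basis $(v, u)$ with $\det(v, u) = 1$, the primitive $w$'s with $\det(v, w) = n$ are exactly $w = nu + bv$ for $b \in \Z$ with $\gcd(b, n) = 1$. Writing $v = (x_1, x_2)$, the canonical $\R^2$-completion $u_0(v) := (-x_2, x_1) / (x_1^2 + x_2^2)$ satisfies $\det(v, u_0(v)) = 1$, and any lattice completion has the form $u = u_0(v) + sv$ for some $s \in \R$ depending on $\Lambda$. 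Hence $w = n u_0(v) + (ns + b) v$, which lies in $\mathcal{S}$ precisely when $ns + b \in \cI_{v}^n$.

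The analytic core is an unfolding of $\int_X \sum_{v \in \Lambda_{\rm pr}} (\cdots) \, d\mu$: the set of pairs $(\Lambda, v)$ with $v$ primitive in $\Lambda$ fibers over $\R^2 \setminus \{0\}$ (recording $v$) with fiber $\R/\Z$ (recording $s \bmod \Z$), and the push-forward of $\mu$ is $\frac{1}{\zeta(2)} \, dv \, ds$, the normalization $1/\zeta(2)$ being pinned down by Siegel's primitive mean value formula. This identifies the $n$-th stratum contribution as
\begin{equation*}
I_n = \frac{1}{\zeta(2)} \int_{\mathcal{S}} \int_{0}^{1} \sum_{\substack{b \in \Z \\ \gcd(b,n)=1}} f\bigl(n u_0(v) + (ns + b) v\bigr) \, ds \, dv.
\end{equation*}
The final ingredient is elementary: the map $(s, b) \in [0, 1) \times \{b \in \Z : \gcd(b, n) = 1\} \to \R$, $(s, b) \mapsto ns + b$, is exactly $\varphi(|n|)$-to-one, because among any $|n|$ consecutive integers exactly $\varphi(|n|)$ are coprime to $n$. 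The change of variables $t = ns + b$ therefore converts the inner sum-integral into $\frac{\varphi(|n|)}{|n|} \int_{\R} f(n u_0(v) + tv) \, dt = \frac{\varphi(|n|)}{|n|} |\cI_{v}^n|$, producing the asserted $n$-th term.

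The main obstacle is the unfolding step just above. I would make it rigorous by identifying each pair $(\Lambda, v)$ with $v$ primitive with a coset in $\SL_2(\R)/N_{\Z}$, where $N_{\Z}$ is the arithmetic unipotent subgroup stabilizing $e_1$, and then calibrating the induced Haar measure by testing against \eqref{Siegel}. The interchange of the series $\sum_{n \neq 0}$ with the integration is harmless since $\mathcal{S}$ is bounded, which forces $\cI_{v}^n = \emptyset$ for all but finitely many $n$ at each fixed $v$.
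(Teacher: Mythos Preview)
Your proposal is correct and follows essentially the same approach as the paper. The paper carries out the unfolding explicitly in Iwasawa coordinates $g=k_\theta a_s u_t$ (its Lemma~\ref{unfolding}), so that your primitive vector $v$ becomes the first column $\bm{x}(k_\theta a_s)$, your fiber parameter $s$ is their $t$, your canonical completion $u_0(v)$ is the second column of $k_\theta a_s$, and your $\varphi(|n|)$-to-one covering argument for $(s,b)\mapsto ns+b$ is exactly their observation that the intervals $[m/n,1+m/n)$, as $m$ runs over integers coprime to $n$, tile $\R$ once per residue class; the only cosmetic difference is that you phrase the decomposition geometrically via $\det(v,w)$ while the paper works directly with the integer coordinates $(m,n)$ of $w$ in the standard basis.
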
}

Before giving the proof let us make a few remarks about Theorem \ref{thm:secondmomentgeneral}. 
First we note that for any bounded ${{\mathcal{S}}}$ there exists a sufficiently large $T > 0$ depending on ${{\mathcal{S}}}$ such that for any $|n|>T$ the set $\cI_{\bm{x}}^n$ is empty for all $\bm{x}\in {{\mathcal{S}}}$. Thus the series on the right hand side of \eqref{equ:general} is a finite sum. Next we note that if we further assume ${{\mathcal{S}}}$ is symmetric with respect to the origin, then by symmetry we have ${{\mathcal{S}}}\cap {{\widetilde{\mathcal{S}}}}={{\mathcal{S}}}$ and $|\cI_{\bm{x}}^n|=|\cI_{\bm{x}}^{-n}|$ for any $n\neq 0$. In particular, for such ${{\mathcal{S}}}$ we have the following slightly simpler formula
\begin{equation}\label{equ:general}
\|\hat{f}\|_2^2=\frac{2}{\zeta(2)}\left({\area}({{\mathcal{S}}})+\sum_{n=1}^{\infty}\frac{\varphi(n)}{n}\int_{{\mathcal{S}}}\left|\cI_{\bm{x}}^n\right|d\bm{x}\right).
\end{equation}

Finally we note that for any $\Lambda\in X$ and $f$ as in Theorem \ref{thm:secondmomentgeneral} we have
$$\left(\hat{f}(\Lambda)\right)^2=\hat{f}(\Lambda)+\hat{\chi}_{{{\mathcal{S}}}\cap{{\widetilde{\mathcal{S}}}}}(\Lambda)+\mathop{\sum_{\bm{v}_1,\bm{v}_2\in \Lambda_{\rm pr}}}_{linearly\ independent}f(\bm{v}_1)f(\bm{v}_2).$$
Thus Theorem \ref{thm:secondmomentgeneral} together with \eqref{Siegel} 
implies that
\begin{equation}\label{equ:rogers}
\int_X\mathop{\sum_{\bm{v}_1,\bm{v}_2\in \Lambda_{\rm pr}}}_{linearly\ independent}f(\bm{v}_1)f(\bm{v}_2)\,d\mu(\Lambda)=\frac{1}{\zeta(2)}\sum_{n\neq 0}\frac{\varphi(|n|)}{|n|}\int_{{\mathcal{S}}}\left|\cI_{\bm{x}}^n\right|\,d\bm{x}.
\end{equation}
It is worth pointing out that the above formula can be compared to its higher-dimensional analogue: when $f$ is an indicator function of a bounded measurable subset ${{\mathcal{S}}}$ of $\R^k$ with $k\geq 3$, $X = \SL_k(\R)/\SL_k(\Z)$ and $\mu$ is the Haar probability measure on $X$,
according to Rogers' second moment formula \cite{Rogers1955} 
the left hand side of \eqref{equ:rogers} equals $\left(\frac{\vol({{\mathcal{S}}})}{\zeta(k)}\right)^2$. However, as we can see here the $k=2$ case is much more complicated, with the answer depending on both the shape and the position of ${{\mathcal{S}}}$.
\subsection{Coordinates and measures}
We fix coordinates on $G=\SL_2(\R)$ via the Iwasawa decomposition $G=KAN$ with
$$K=\left\{k_{\theta}\ \left|\ 0\leq \theta< 2\pi\right.\right\},\quad A=\left\{a_s\ \left|\ s\in \R\right.\right\}\ \textrm{and}\ N=\left\{u_t\ \left|\ t\in \R\right.\right\},$$
where $k_{\theta}=\begin{pmatrix}
\cos\theta & -\sin\theta\\
\sin\theta & \cos\theta\end{pmatrix}$, $a_s=\begin{pmatrix}
e^s & 0\\
0 & e^{-s}\end{pmatrix}$ and $u_t=\begin{pmatrix}
1 & t\\
0 & 1\end{pmatrix}$.
 Explicitly, under coordinates $g=k_{\theta}a_su_t$, $\mu$ is given by 
 \begin{equation}\label{Haarneasure}
d\mu(g)=\frac{1}{\zeta(2)} e^{2s}\,d\theta dsdt.
\end{equation}

There is a natural identification between the homogeneous space $G/N$ and $\R^2\ssm \{\bm{0}\}$ induced by the map $G\to \R^2\ssm \{\bm{0}\}$ sending $g=k_{\theta}a_su_t\in G$ to, \begin{equation}\label{tran}
\bm{x}(g)=\begin{pmatrix}
x_1\\
x_2\end{pmatrix}=g\begin{pmatrix}1\\
0\end{pmatrix}=\begin{pmatrix}
e^s\cos\theta\\
e^s\sin\theta\end{pmatrix} 
\end{equation}
the left column of $g$. The Lebesgue measure, $d\bm{x}$, on $\R^2\ssm \{\bm{0}\}\cong G/N$ can be expressed via the polar coordinates $(s,\theta)$ as
\begin{equation}\label{leb}
d\bm{x}(k_{\theta}a_s)=e^{2s}\,d\theta ds.
\end{equation}

\subsection{The second moment formula}
In this subsection we prove Theorem \ref{thm:secondmomentgeneral}, and with some more analysis we prove Theorem \ref{thm:secondmoment}. 
{As the first step of our computation we recall the following preliminary identity which relies on a standard unfolding argument. We note that one can find it in \cite[Chapter $\rom{8}$, $\S$1]{LangSL2R}, and we include a short proof here to make the paper self-contained. See also \cite[Proposition 2.3]{KelmerYu2018} for a generalization to the space of symplectic lattices.}
\begin{lem}
\label{unfolding}
For any bounded and compactly supported function $f$ on $\R^2$ {and for any bounded $F\in L^2(X,\mu)$ }we have 
$${\langle \hat{f}, F\rangle =\frac{1}{\zeta(2)}\int_{-\infty}^{\infty}\int_0^{2\pi}f(\bm{x}(k_{\theta}a_s))\overline{\cP_F(\bm{x}(k_{\theta}a_s))}e^{2s}\,d\theta ds},$$
where $\cP_F$ is defined by
$$\cP_F(\bm{x}(k_{\theta}a_s))=\int_0^1F(k_{\theta}a_su_t\Z^2)\,dt$$
with $k_{\theta}, a_s$ and $u_t$ as above, and $\langle, \rangle$ is the inner product on $L^2(X,\mu)$.
\end{lem}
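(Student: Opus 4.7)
The plan is to apply the standard unfolding argument from the theory of automorphic forms. First I would identify primitive integer vectors with a coset space: since $\SL_2(\Z)$ acts transitively on $\Z^2_{\mathrm{pr}}$ (by B\'ezout's identity) and the stabilizer of $\bm{e}_1 = (1,0)^{\mathrm{T}}$ is $N(\Z) := \{u_n : n \in \Z\}$, the orbit map $\gamma N(\Z) \mapsto \gamma \bm{e}_1$ provides a bijection $\SL_2(\Z)/N(\Z) \cong \Z^2_{\mathrm{pr}}$. Writing $\Lambda = g\Z^2$ for $g \in G$, this rewrites the primitive Siegel transform as
$$\hat{f}(g\Z^2) = \sum_{\gamma \in \SL_2(\Z)/N(\Z)} f(g\gamma \bm{e}_1).$$

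Next, I would plug this expression into $\langle \hat{f}, F\rangle = \int_{G/\SL_2(\Z)} \hat{f}(\Lambda)\,\overline{F(\Lambda)}\, d\mu(\Lambda)$ and exploit the right $\SL_2(\Z)$-invariance of $F$ to unfold the sum over $\gamma$ into an enlarged domain of integration, producing
$$\langle \hat{f}, F\rangle = \int_{G/N(\Z)} f(g\bm{e}_1)\,\overline{F(g\Z^2)}\, d\bar{\mu}(gN(\Z)),$$
where $\bar{\mu}$ is the quotient measure inherited from the Haar measure on $G$ with the same normalization as $\mu$. Absolute convergence is not an issue because $f$ is bounded and compactly supported, which also ensures $\hat{f}$ is locally finite on $X$.

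Finally, I would coordinatize $G/N(\Z)$ by the Iwasawa decomposition, taking $(\theta, s, t) \in [0, 2\pi) \times \R \times [0, 1)$ as a fundamental domain (since $\{u_t : 0 \leq t < 1\}$ is a fundamental domain for the right action of $N(\Z)$ on $N$). Because $u_t$ fixes $\bm{e}_1$, equation \eqref{tran} yields $g\bm{e}_1 = k_\theta a_s \bm{e}_1 = \bm{x}(k_\theta a_s)$, so both $f(g\bm{e}_1)$ and the Jacobian factor $e^{2s}$ are independent of $t$. Performing the $t$-integration first converts the inner integral into $\overline{\cP_F(\bm{x}(k_\theta a_s))}$, and invoking \eqref{Haarneasure} for the remaining $(\theta, s)$ integration recovers the claimed identity.

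The main obstacle is just the bookkeeping of measure normalizations across $G$, $G/\SL_2(\Z)$, and $G/N(\Z)$, to ensure that the single factor of $1/\zeta(2)$ appears in the correct place and with the correct power. This is handled transparently by \eqref{Haarneasure}: since $\mu$ is normalized so that $\mu(X) = 1$, the lifted Haar measure on $G$ in Iwasawa coordinates is precisely $\frac{1}{\zeta(2)} e^{2s}\,d\theta\,ds\,dt$, and the unfolding transports this normalization forward with no further adjustment.
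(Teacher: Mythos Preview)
Your proposal is correct and follows essentially the same unfolding argument as the paper: identify $\Z^2_{\rm pr}$ with $\SL_2(\Z)/\Gamma_\infty$ (your $N(\Z)$), rewrite $\hat f$ as a sum over this coset space, unfold the integral from $G/\SL_2(\Z)$ to $G/\Gamma_\infty$, and then parametrize the latter by Iwasawa coordinates $(\theta,s,t)\in[0,2\pi)\times\R\times[0,1)$ so that the $t$-integration produces $\cP_F$. The paper's version is phrased in terms of explicit fundamental domains rather than quotient measures, but the content is identical.
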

\begin{proof}
Let $\G=\SL_2(\Z)$ and let $\G_{\infty}=\G\cap N$. Recall that there is an identification between $\G/ \G_{\infty}$ and $\Z^2_{\rm pr}$ sending $\gamma\G_{\infty}$ to $\gamma\begin{pmatrix}1\\
0\end{pmatrix}$. Using this identification, for any $\Lambda=g\Z^2$ with $g\in \SL_2(\R)$ we can write 
\begin{equation}\label{equ:prid}
\hat{f}(\Lambda)=\sum_{\bm{v}\in \Lambda_{\rm pr}}f(\bm{v})=\sum_{\bm{w}\in \Z^2_{\rm pr}}f(g\bm{w})=\sum_{\gamma\in\G/ \G_{\infty}}\tilde{f}(g\gamma),
\end{equation}
where $\tilde{f}(g):=f\left(g\begin{pmatrix}1\\
0\end{pmatrix}\right)$. We note that $\tilde{f}$ is a right $N$-invariant function on $G$. Let $\cF_{\G}$ be a fundamental domain for $X=G/\G$, and let $\cF_{\infty}$ be a fundamental domain for $G/\G_{\infty}$. Note that using the Iwasawa decomposition $G=KAN$ we can choose
\begin{equation}\label{equ:fund}
\cF_{\infty}=\left\{k_{\theta}a_su_t\ \left|\ 0< \theta< 2\pi,\ s\in \R,\ 0< t< 1\right.\right\}.
\end{equation}
Moreover, fix a set of coset representatives $\Sigma_{\infty}\subset \G$ for $\G/ \G_{\infty}$, and  note that $\bigcup_{\gamma\in\Sigma_{\infty}}\cF_{\G}\gamma$ is a disjoint union and  forms a fundamental domain for $G/\G_{\infty}$. 
Now for any {bounded} $F\in L^2(X,\mu)$, using \eqref{Haarneasure}, \eqref{equ:prid}, \eqref{equ:fund} and the facts that $F$ is right $\G$-invariant and $\tilde{f}$ is right $N$-invariant, we have
\begin{align*}
\langle \hat{f}, F\rangle&:=\int_{\cF_{\G}}\hat{f}(g\Z^2)\overline{F(g\Z^2)}d\mu(g)=\sum_{\gamma\in\G/ \G_{\infty}}\int_{\cF_{\G}}\tilde{f}(g\gamma)\overline{F(g\Z^2)}\,d\mu(g)\\
&=\sum_{\gamma\in\Sigma_{\infty}}\int_{\cF_{\G}\gamma}\tilde{f}(g)\overline{F(g\Z^2)}d\mu(g)=\int_{\bigsqcup_{\gamma\in\Sigma_{\infty}}\cF_{\G}\gamma}\tilde{f}(g)\overline{F(g\Z^2)}\,d\mu(g)\\
&=\int_{\cF_{\infty}}\tilde{f}(g)\overline{F(g\Z^2)}d\mu(g)=\frac{1}{\zeta(2)}\int_{-\infty}^{\infty}\int_{0}^{2\pi}\int_0^1\tilde{f}(k_{\theta}a_su_t)\overline{F(k_{\theta}a_su_t\Z^2)}e^{2s}\,dtd\theta ds\\
&=\frac{1}{\zeta(2)}\int_{-\infty}^{\infty}\int_{0}^{2\pi}f(\bm{x}(k_{\theta}a_s))\int_0^1\overline{F(k_{\theta}a_su_t\Z^2)}\,dt\,e^{2s}\,d\theta ds.
\end{align*}
Finally, we note that the above equalities can be justified since $F$ is bounded and the defining series for $\hat{f}$ is absolutely convergent (see \cite[Lemma 16.10]{Veech1998}).
\end{proof}
With this {preliminary identity}, we can now give the 
\begin{proof}[Proof of Theorem \ref{thm:secondmomentgeneral}]
Using the relation \eqref{leb} and Lemma \ref{unfolding} we have
\begin{equation}\label{equ:pre}
\|\hat{f}\|_2^2=\frac{1}{\zeta(2)}\int_{\R^2}f(\bm{x}{(k_{\theta}a_s)})\cP_{\hat{f}}(\bm{x}(k_{\theta}a_s))d\bm{x}=\frac{1}{\zeta(2)}\int_{S}\cP_{\hat{f}}(\bm{x}(k_{\theta}a_s))\,d\bm{x},
\end{equation}
where
$$\cP_{\hat{f}}(\bm{x}(k_{\theta}a_s))=\int_0^1\hat{f}(k_{\theta}a_su_t\Z^2)\,dt$$
with $k_{\theta}, a_s$ and $u_t$ as before. First, by the definition of the {primitive} Siegel transform we have
$$\hat{f}(k_{\theta}a_su_t\Z^2)=\#\left\{(m,n)\in \Z^2_{\rm pr}\ \left|\ k_{\theta}a_su_t\begin{pmatrix}m\\n\end{pmatrix}\in {{\mathcal{S}}}\right.\right\}.$$
Thus for $\bm{x}(k_{\theta}a_s)\in {{\mathcal{S}}}$ and $0\leq t<1$ we have
$$\hat{f}(k_{\theta}a_su_t\Z^2)=\sum_{(m,n)\in \Z^2_{\rm pr}}\chi_{I^{(m,n)}_{\bm{x}(k_{\theta}a_s)}}(t),$$
where 
$$I^{(m,n)}_{\bm{x}(k_{\theta}a_s)}:=\left\{0\leq t<1\ \left|\ k_{\theta}a_su_t\begin{pmatrix}m\\n\end{pmatrix}\in {{\mathcal{S}}}\right.\right\},$$
implying that 
$$\cP_{\hat{f}}(\bm{x}(k_{\theta}a_s))=\sum_{(m,n)\in \Z^2_{\rm pr}}\left|I_{\bm{x}(k_{\theta}a_s)}^{(m,n)}\right|=\left|I_{\bm{x}(k_{\theta}a_s)}^{(1,0)}\right|+\left|I_{\bm{x}(k_{\theta}a_s)}^{(-1,0)}\right|+\mathop{\sum_{(m,n)\in \Z^2_{\rm pr}}}_{n\neq  0}\left|I_{\bm{x}(k_{\theta}a_s)}^{(m,n)}\right|.$$ 
Next, by direct computation we have for $\bm{x}(k_{\theta}a_s)=(x_1,x_2)=(e^s\cos\theta,e^s\sin\theta)\in {{\mathcal{S}}}$, 
\begin{equation}\label{equ:keyequ}
k_{\theta}a_su_t\begin{pmatrix}m\\n\end{pmatrix}=n\begin{pmatrix}-e^{-s}\sin\theta\\ e^{-s}\cos\theta\end{pmatrix}+(m+nt)\begin{pmatrix}e^s\cos\theta\\e^s\sin\theta\end{pmatrix}
=n\begin{pmatrix}\frac{-x_2}{x_1^2+x_2^2}\\ \frac{x_1}{x_1^2+x_2^2}\end{pmatrix}+(m+nt)\begin{pmatrix}x_1\\x_2\end{pmatrix}.
\end{equation}
When $(m,n)=(1,0)$ we have for $\bm{x}(k_{\theta}a_s)\in {{\mathcal{S}}}$, $k_{\theta}a_su_t\begin{pmatrix} 1\\0\end{pmatrix}=\begin{pmatrix}x_1\\ x_2\end{pmatrix}$ is contained in ${{\mathcal{S}}}$ for any $0\leq t<1$. Thus $I_{\bm{x}(k_{\theta}a_s)}^{(1,0)}=[0,1)$ and $\left|I_{\bm{x}(k_{\theta}a_s)}^{(1,0)}\right|=1$ for any $\bm{x}(k_{\theta}a_s)\in {{\mathcal{S}}}$. Similarly, when $(m,n)=(-1,0)$ we have for $\bm{x}(k_{\theta}a_s)\in {{\mathcal{S}}}$, $k_{\theta}a_su_t\begin{pmatrix} -1\\0\end{pmatrix}=\begin{pmatrix}-x_1\\ -x_2\end{pmatrix}$ is contained in ${{\mathcal{S}}}$ if and only if $\bm{x}\in {{\mathcal{S}}}\cap {{\widetilde{\mathcal{S}}}}$ with ${{\widetilde{\mathcal{S}}}}$ as in the theorem, implying that $I_{\bm{x}(k_{\theta}a_s)}^{(-1,0)}=[0,1)$ whenever $\bm{x}\in {{\mathcal{S}}}\cap {{\widetilde{\mathcal{S}}}}$.

When $n\neq 0$ by \eqref{equ:keyequ} we have for any integer $m$ coprime to $n$
\begin{align*}
\left|I_{\bm{x}}^{{(m,n)}}\right|&=\left|\left\{0\leq t<1\ \left|\ n\left(\frac{-x_2}{x_1^2+x_2^2},\frac{x_1}{x_1^2+x_2^2}\right)+(m+nt)(x_1,x_2)\in {{\mathcal{S}}}\right.\right\}\right|\\
&=\left|\left\{\frac{m}{n}\leq t<1+\frac{m}{n}\ \left|\ n\left(\frac{-x_2}{x_1^2+x_2^2},\frac{x_1}{x_1^2+x_2^2}\right)+nt(x_1,x_2)\in {{\mathcal{S}}}\right.\right\}\right|.
\end{align*}
We note that as $m$ runs through all the integers in each congruence class in $(\Z/|n|\Z)^{\times}$, the intervals $[\frac{m}{n},1+\frac{m}{n})$ cover $\R$ exactly once. Thus for $n\neq 0$
$$\mathop{\sum_{m\in \Z}}_{(m,n)=1}\left|I_{\bm{x}(k_{\theta}a_s)}^{(m,n)}\right|=\varphi(|n|)\left|\left\{t\in \R\ \left|\ n\left(\frac{-x_2}{x_1^2+x_2^2},\frac{x_1}{x_1^2+x_2^2}\right)+{nt}(x_1,x_2)\in {{\mathcal{S}}}\right.\right\}\right|
=\frac{\varphi(|n|)}{|n|}\left|\cI_{\bm{x}}^n\right|,$$
where $\varphi$ is the Euler's totient function and $\cI_{\bm{x}}^n$ is as in Theorem \ref{thm:secondmomentgeneral}. We thus have for $\bm{x}\in {{\mathcal{S}}}$
$$\cP_{\hat{f}}(\bm{x})=1+\chi_{{{\mathcal{S}}}\cap {{\widetilde{\mathcal{S}}}}}(\bm{x})+\sum_{n\neq 0}\frac{\varphi(|n|)}{|n|}\left|\cI_{\bm{x}}^n\right|.$$
We conclude the proof by plugging the above equation into \eqref{equ:pre}.
\end{proof}
We can now give the 
{\begin{proof}[Proof of Theorem \ref{thm:secondmoment}]
To simplify notation for any $\bm{x}\in \R^2$, $t\in \R$ and $n\geq 1$ let 
$$\bm{v}(\bm{x},t,n):=n\left(\frac{-x_2}{x_1^2+x_2^2},\frac{x_1}{x_1^2+x_2^2}\right)+t(x_1,x_2).$$
First we note that $\left\|\bm{v}(\bm{x},t,n)\right\|_2^2=\frac{n^2}{x_1^2+x_2^2}+t^2(x_1^2+x_2^2)\geq \frac{n^2}{x_1^2+x_2^2}$, where $\|\cdot\|_2$ stands for the standard Euclidean norm on $\R^2$. Thus for $\bm{x}\in {\cS_r}$ and $n\geq 2$ 
we have 
$$\left\|\bm{v}(\bm{x},t,n)\right\|\geq \frac{\sqrt{2}}{2}\left\|\bm{v}(\bm{x},t,n)\right\|_2\geq \frac{\sqrt{2}}{\|\bm{x}\|_2}>e^r\geq e^{-r}, $$
implying that $\cI_{\bm{x}}^n$ is empty for any $\bm{x}\in {\cS_r}$ and any $n\geq 2$. Here $\|\cdot\|$ stands for the supremum norm on $\R^2$, and for the third inequality we used the fact that $\|\bm{x}\|_2<\sqrt{2}e^{-r}$, which follows from $\bm{x}$ being an element of ${\cS_r}$. Since ${\cS_r}$ is symmetric with respect to the origin, applying \eqref{equ:general} to $f=f_r$ we get
\begin{equation}\label{equ:med}
\|\hat{f}_r\|_2^2=\frac{8e^{-2r}}{\zeta(2)}+\frac{2}{\zeta(2)}\int_{{\cS_r}}\left|\cI_{\bm{x}}^1\right|d\bm{x}=\frac{8e^{-2r}}{\zeta(2)}+\frac{8}{\zeta(2)}\int_{{\cS_r^+}}\left|\cI_{\bm{x}}^1\right|d\bm{x},
\end{equation}
where ${\cS_r^+}$ is the intersection of ${\cS_r}$ with the first quadrant, and for the second equality we used the fact that $\left|\cI_{(x_1,x_2)}^1\right|=\left|\cI_{(\pm x_1,\pm x_2)}^1\right|$ which follows from the invariance of ${\cS_r}$ under reflections around the coordinate axes.
We note that for $\bm{x}\in {\cS_r^+}$, $(\frac{-x_2}{x_1^2+x_2^2},\frac{x_1}{x_1^2+x_2^2})+t(x_1,x_2)\in {\cS_r}$ if and only if 
$$-\frac{e^{-r}}{x_1}+\frac{x_2}{x_1(x_1^2+x_2^2)}<t<\frac{e^{-r}}{x_1}+\frac{x_2}{x_1(x_1^2+x_2^2)},$$
and
$$-\frac{e^{-r}}{x_2}-\frac{x_1}{x_2(x_1^2+x_2^2)}<t<\frac{e^{-r}}{x_2}-\frac{x_1}{x_2(x_1^2+x_2^2)}.$$
By direct computation if $r\geq \frac12\log 2$ then there is no $t\in \R$ satisfying above inequalities. Thus $\cI_{\bm{x}}^1$ is empty, and the integral in the right hand side of \eqref{equ:med} is zero. If $0\leq r<\frac12\log 2$, we define for any $\bm{x}\in {\cS_r^+}$,
$$L(\bm{x}):=\max\left\{-\frac{e^{-r}}{x_1}+\frac{x_2}{x_1(x_1^2+x_2^2)},-\frac{e^{-r}}{x_2}-\frac{x_1}{x_2(x_1^2+x_2^2)}\right\}$$
and 
$$U(\bm{x}):=\min\left\{\frac{e^{-r}}{x_1}+\frac{x_2}{x_1(x_1^2+x_2^2)},\frac{e^{-r}}{x_2}-\frac{x_1}{x_2(x_1^2+x_2^2)}\right\}.$$
It is not hard to verify that as long as $0\leq r<\frac12\log 2$, for $\bm{x}\in {\cS_r^+}$ we have
$$L(\bm{x})=-\frac{e^{-r}}{x_1}+\frac{x_2}{x_1(x_1^2+x_2^2)}\quad \textrm{and}\quad U(\bm{x})=\frac{e^{-r}}{x_2}-\frac{x_1}{x_2(x_1^2+x_2^2)}.$$
Thus $\cI^1_{\bm{x}}$ is nonempty if and only if $L(\bm{x})<U(\bm{x})$ and whenever it is nonempty we have $\cI^1_{\bm{x}}=(-\frac{e^{-r}}{x_1}+\frac{x_2}{x_1(x_1^2+x_2^2)},\frac{e^{-r}}{x_2}-\frac{x_1}{x_2(x_1^2+x_2^2)})$ . By direct computation we have 
$L(\bm{x})<U(\bm{x})$ if and only if $\bm{x}\in \cD_r=\left\{(x_1,x_2)\in {\cS_r^+}\ \left|\ x_1+x_2>e^r\right.\right\}$. 
Hence
\begin{align*}
\|\hat{f}_r\|_2^2&=\frac{8e^{-2r}}{\zeta(2)}+\frac{8}{\zeta(2)}\int_{\cD_r}\left(\left(\frac{e^{-r}}{x_2}-\frac{x_1}{x_2(x_1^2+x_2^2)}\right)-\left(-\frac{e^{-r}}{x_1}+\frac{x_2}{x_1(x_1^2+x_2^2)}\right)\right)\,dx_1dx_2\\
&=\frac{8e^{-2r}}{\zeta(2)}+\frac{8}{\zeta(2)}\int_{\cD_r}\Big(\frac{e^{-r}}{x_1}+\frac{e^{-r}}{x_2}-\frac{1}{x_1x_2}\Big)\,dx_1dx_2.\qedhere
\end{align*}
\end{proof}}
Besides the sets ${\cS_r}$, another natural candidate to test formula \eqref{equ:general} is the family of indicator functions of {balls}. For any $R>0$ let ${{\mathcal{B}_R}}$ be the open ball of radius $R$ centered at the origin, and let $h_R$ be the indicator function of ${{\mathcal{B}_R}}$.
{We note that Randol \cite{Randol1970} established an asymptotic formula for $\|\hat{h}_R\|_2^2$ for large $R$, and here} we prove the following formula for $\|\hat{h}_R\|_2^2$: 
\begin{cor}\label{cor:disc}
For any $R>0$ let $h_R$ be as above. Then we have
$$\|\hat{h}_R\|_2^2=\frac{12 R^2}{\pi}+\frac{48}{\pi}\sum_{n=1}^{\left\lfloor{R^2}\right \rfloor}\varphi(n)\left(\frac{\sqrt{R^4-n^2}}{n}+\arcsin\left(\frac{n}{R^2}\right)-\frac{\pi}{2}\right).$$
\end{cor}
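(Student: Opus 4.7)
The plan is to apply the simplified formula \eqref{equ:general} (which is available since the ball $\mathcal{B}_R$ is symmetric about the origin) and evaluate everything explicitly. The constant prefactor is $\frac{2}{\zeta(2)}=\frac{12}{\pi^2}$, and the area term produces $\frac{12}{\pi^2}\cdot\pi R^2=\frac{12R^2}{\pi}$, which matches the first summand. The remaining task is to compute, for each $n\ge 1$, the integral $\int_{\mathcal{B}_R}|\cI_{\bm{x}}^n|\,d\bm{x}$.

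The main simplification comes from observing that the two vectors appearing in the definition of $\cI_{\bm{x}}^n$ are orthogonal: writing $\rho=\|\bm{x}\|_2$, the vector $n(-x_2,x_1)/(x_1^2+x_2^2)$ is perpendicular to $\bm{x}$ and has Euclidean length $n/\rho$, while $t(x_1,x_2)$ has Euclidean length $|t|\rho$. Hence their sum lies in $\mathcal{B}_R$ iff $n^2/\rho^2+t^2\rho^2<R^2$, which gives
\[
|\cI_{\bm{x}}^n|=\frac{2\sqrt{R^2\rho^2-n^2}}{\rho^2}\quad\text{if }\rho>n/R,
\]
and $0$ otherwise. In particular, the condition $\rho<R$ forces the range of $n$ with a nonempty contribution to be $1\le n\le \lfloor R^2\rfloor$; this is where the truncation in the statement comes from.

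Next I would switch to polar coordinates in $\bm{x}$, where the integrand depends only on $\rho$, to obtain
\[
\int_{\mathcal{B}_R}|\cI_{\bm{x}}^n|\,d\bm{x}=4\pi\int_{n/R}^{R}\frac{\sqrt{R^2\rho^2-n^2}}{\rho}\,d\rho.
\]
A trigonometric substitution $\rho=(n/R)\sec\phi$ turns the integrand into $n\tan^2\phi=n(\sec^2\phi-1)$, which integrates to $n(\tan\phi-\phi)$. Evaluating at the limits (using $\cos\phi=n/R^2$ at the upper bound, so $\tan\phi=\sqrt{R^4-n^2}/n$ and $\phi=\arccos(n/R^2)=\frac{\pi}{2}-\arcsin(n/R^2)$) yields
\[
\int_{\mathcal{B}_R}|\cI_{\bm{x}}^n|\,d\bm{x}=4\pi\left(\sqrt{R^4-n^2}+n\arcsin\!\left(\frac{n}{R^2}\right)-\frac{n\pi}{2}\right).
\]

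Finally, I would multiply by $\varphi(n)/n$, sum over $1\le n\le\lfloor R^2\rfloor$, and apply the overall factor $\frac{2}{\zeta(2)}=\frac{12}{\pi^2}$; the combined constant becomes $\frac{12}{\pi^2}\cdot 4\pi=\frac{48}{\pi}$, which exactly produces the sum in the statement. I do not anticipate a serious obstacle: the only delicate points are recognizing the orthogonality that reduces the problem to a one-dimensional radial integral, and correctly identifying where the sum truncates. Both are straightforward once formula \eqref{equ:general} is in hand.
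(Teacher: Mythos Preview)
Your proposal is correct and follows essentially the same approach as the paper: apply \eqref{equ:general}, use the orthogonality of $(x_1,x_2)$ and $(-x_2,x_1)/(x_1^2+x_2^2)$ to reduce $|\cI_{\bm{x}}^n|$ to a radial expression, pass to polar coordinates, and evaluate the resulting one-variable integral. The only cosmetic difference is in that last step: the paper substitutes $r\mapsto \tfrac{R}{n}r$ and quotes the antiderivative $\int\sqrt{1-r^{-2}}\,dr=\sqrt{r^2-1}+\arcsin(1/r)+C$, whereas you use the equivalent trigonometric substitution $\rho=(n/R)\sec\phi$.
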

\begin{proof}
Since ${{\mathcal{B}_R}}$ is symmetric with respect to the origin, we can apply \eqref{equ:general} to $\|\hat{h}_R\|_2^2$, and use $\zeta(2)=\frac{\pi^2}{6}$ to get
$$\|\hat{h}_R\|_2^2=\frac{12 R^2}{\pi} +\frac{12}{\pi^2}\sum_{n=1}^{\infty}\frac{\varphi(n)}{n}\int_{{{\mathcal{B}_R}}}|\cI_{\bm{x}}^n|\,d\bm{x},$$
where $$\cI_{\bm{x}}^n:=\left\{t\in\R\ \left|\ \left\|n\left(\frac{-x_2}{x_1^2+x_2^2},\frac{x_1}{x_1^2+x_2^2}\right)+{t}(x_1,x_2)\right\|_2\right.<R\right\}.$$ Using the polar coordinates, for any $(x_2,x_2)=(r\cos\theta,r\sin\theta)\in {{\mathcal{B}_R}}$ and $n\geq Rr$ we can write 
$$\left\|n\left(\frac{-x_2}{x_1^2+x_2^2},\frac{x_1}{x_1^2+x_2^2}\right)+{t}(x_1,x_2)\right\|_2^2=\frac{n^2}{r^2}+t^2r^2\geq R^2,$$
implying that $\cI_{\bm{x}}^n$ is empty whenever $n\geq Rr=R\|\bm{x}\|_2$. In particular, $\cI_{\bm{x}}^n$ is empty for any $\bm{x}\in {{\mathcal{B}_R}}$ if $n\geq R^2$. Similarly for any $1\leq n\leq \left\lfloor{R^2}\right \rfloor$, $\cI_{\bm{x}}^n$ is empty if $\|\bm{x}\|_2\leq \frac{n}{R}$, and $\cI_{\bm{x}}^n=\left(-\frac{\sqrt{R^2r^2-n^2}}{r^2},\frac{\sqrt{R^2r^2-n^2}}{r^2}\right)$ if $\frac{n}{R}<\|\bm{x}\|_2<R$. Hence 
\begin{align*}
\|\hat{h}_R\|_2^2&=\frac{12 R^2}{\pi} +\frac{12}{\pi^2}\sum_{n=1}^{\left\lfloor{R^2}\right \rfloor}\frac{\varphi(n)}{n}{\int_0^{2\pi}}\int_{\frac{n}{R}}^R\frac{2\sqrt{R^2r^2-n^2}}{r^2}r\,dr{d\theta}\\
&=\frac{12 R^2}{\pi} +\frac{48}{\pi}\sum_{n=1}^{\left\lfloor{R^2}\right \rfloor}\varphi(n)\int_1^{\frac{R^2}{n}}\sqrt{1-r^{-2}}\,dr\\
&=\frac{12 R^2}{\pi}+\frac{48}{\pi}\sum_{n=1}^{\left\lfloor{R^2}\right \rfloor}\varphi(n)\left(\frac{\sqrt{R^4-n^2}}{n}+\arcsin\left(\frac{n}{R^2}\right)-\frac{\pi}{2}\right),
\end{align*}
where for the second equality we applied a change of variable $\frac{R}{n}r\mapsto r$, and for the last equality we used the fact  that $\int\sqrt{1-r^{-2}}dr=\sqrt{r^2-1}+\arcsin\left(\frac{1}{r}\right)+C$ for $r\geq 1$.
\end{proof}
\section{Measure estimates of the shrinking targets}\label{thickening}
In this section, using the methods developed in the previous section, we prove Theorem~\ref{thm:measure} and then use it to derive Theorem \ref{thm:thickening} and Corollary \ref{cor:series}.
\begin{proof}[Proof of Theorem \ref{thm:measure}]
For any $r>0$, let $f_r$ be the indicator function of ${\cS_r}$ as before. For any integer $k\geq 0$, let $B_r^k\subset X$ be the set of unimodular lattices having $2k$ nonzero primitive points in ${\cS_r}$. First, we note that $K_{r}=B_r^0$ consists of lattices with no nonzero points in ${\cS_r}$. Moreover, for any $\Lambda\in X$, there are at most two linearly independent primitive points of $\Lambda$ inside ${\cS_r}$. 
We thus have for any $r>0$
\begin{equation}\label{equ:me1}
\sum_{k=0}^2\mu(B_r^k)=1,
\end{equation}
and
$$\hat{f}_r=2\chi_{B_r^1}+4\chi_{B_r^2}.$$
{Thus we can take the first moment and apply \eqref{Siegel} to get}
\begin{equation}\label{equ:first}
\mu(B_r^1)+2\mu(B_r^2)=\frac12\int_X\hat{f}_r(\Lambda)d\mu(\Lambda)=\frac{2e^{-2r}}{\zeta(2)}.
\end{equation}
Taking the second moment of $\hat{f}_r$ we get
\begin{equation}\label{equ:second}
4\mu(B_r^1)+16\mu(B_r^2)=\|\hat{f}_r\|_2^2.
\end{equation}
Solving equations \eqref{equ:me1}, \eqref{equ:first} and \eqref{equ:second} and applying Theorem \ref{thm:secondmoment} to \eqref{equ:second}, we get
$$\mu(K_{r})=\mu(B_r^0)=1-\frac{2e^{-2r}}{\zeta(2)}+\frac{1}{\zeta(2)}\int_{\cD_r}\Big(\frac{e^{-r}}{x_1}+\frac{e^{-r}}{x_2}-\frac{1}{x_1x_2}\Big)\,dx_1dx_2.$$
By direct computation we have for $0<r<\frac12 \log 2$
\begin{equation}\label{equ:explicit}
\begin{aligned}
&\int_{\cD_r}\Big(\frac{e^{-r}}{x_1}+\frac{e^{-r}}{x_2}-\frac{1}{x_1x_2}\Big)\,dx_1dx_2\\
&=2(1-r)(2e^{-2r}-1+ r)+(2-2e^{-2r}-2r)\log (1-e^{-2r})-2r^2+\int_{1-e^{-2r}}^{e^{-2r}}\frac{\log t}{1-t}\,dt\\
&=2(1-r)(2e^{-2r}-1+ r)+(2-2e^{-2r}-2r)\log (1-e^{-2r})-2r^2+\Li_2(1-e^{-2r})-\Li_2(e^{-2r}),
\end{aligned}
\end{equation}
where $\Li_s(z)=\sum_{k=1}^{\infty}\frac{z^k}{k^s}$ is the polylogarithm function. Now for the term $\log (1-e^{-2r})$, using the Taylor expansion $e^{-2r}=1-2r+2r^2+O(r^3)$, we get 
$$\log (1-e^{-2r})=\log(2r)+\log\left(1-r+O(r^2)\right)=\log(2r)-r+O(r^2).$$ 
Using the series representation $\Li_2(z)=\sum_{k=1}^{\infty}\frac{z^k}{k^2}$ we get that $\Li_2(1-e^{-2r})=2r+O(r^2)$. Finally for the term $\Li_2(e^{-2r})$ we have the expansion (see \cite[Equation (9.7)]{Wood1992})
$$\Li_2(e^{-2r})=-2r\big(1-\log(2r)\big)+\zeta(2)+O(r^2).$$ 
Plugging these 
into \eqref{equ:explicit} and using the expansion $e^{-2r}=1-2r+2r^2+O(r^3)$, we get
\begin{equation}\label{equ:explicit2}
\int_{\cD_r}\Big(\frac{e^{-r}}{x_1}+\frac{e^{-r}}{x_2}-\frac{1}{x_1x_2}\Big)\,dx_1dx_2=2-\zeta(2)-4r-4r^2\log r+O(r^2),
\end{equation}
implying that
\begin{align*}
\mu(K_{r})&=1-\frac{2e^{-2r}}{\zeta(2)}+\frac{1}{\zeta(2)}\left(2-\zeta(2)-4r-4r^2\log r+O(r^2)\right)\\
&=-\frac{4r^2\log r}{\zeta(2)}+O(r^2)
\end{align*}
finishing the proof.
\end{proof}

{To estimate the measure of the thickening, we will need the following two preliminary lemmas.} We note that by Hajos-Minkowski Theorem (see \cite[\rom{9}.1.3]{Cassels1997}) we have 
$$K_{0}=\Delta^{-1}\{0\} = \bigcup_{x\in [0,1)}\begin{pmatrix}
1 & x\\
0 & 1\end{pmatrix}\Z^2\bigcup \begin{pmatrix}
1 & 0\\
x & 1\end{pmatrix}\Z^2.$$ 
A simple observation is that any $\Lambda\in K_{0}$ contains either the point $(1,0)$ or the point $(0,1)$. 
Thus intuitively one shall expect that when $r$ is small, lattices in $K_{r}$ contain points close to either $(1,0)$ or $(0,1)$. For any $r>0$, let ${\cA_r}\subset \R^2$ be the closed rectangle with vertices $(\pm\sqrt{e^{2r}-1},e^r)$ and $(\pm\sqrt{e^{2r}-1},e^{-r})$ and let ${\cC_r}$ be the closed rectangle with vertices $(e^r,\pm\sqrt{e^{2r}-1})$ and $(e^{-r},\pm\sqrt{e^{2r}-1})$, see Figure \ref{fig:1} . The following lemma asserts that when $r$ is small, then any $\Lambda\in K_{r}$ contains points either in ${\cA_r}$ or {in} ${\cC_r}$ (noting that ${\cA_r}$ is a small rectangle containing $(0,1)$ and ${\cC_r}$ is a small rectangle containing $(1,0)$).   
\begin{figure}[H]
  \centering
   \begin{minipage}{0.45\textwidth}
        \centering
        \includegraphics[width=0.5\textwidth]{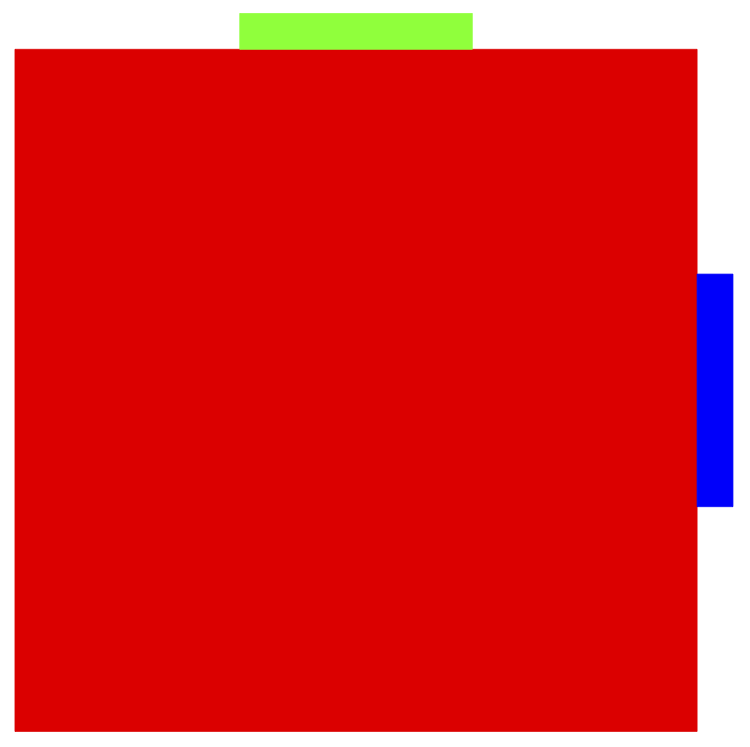} 
        \caption{The square ${\cS_r}$ (red), the rectangles ${\cA_r}$ (green) and ${\cC_r}$ (blue).}\label{fig:1}
    \end{minipage} \hfill
     \begin{minipage}{0.45\textwidth}
        \centering
        \includegraphics[width=0.5\textwidth]{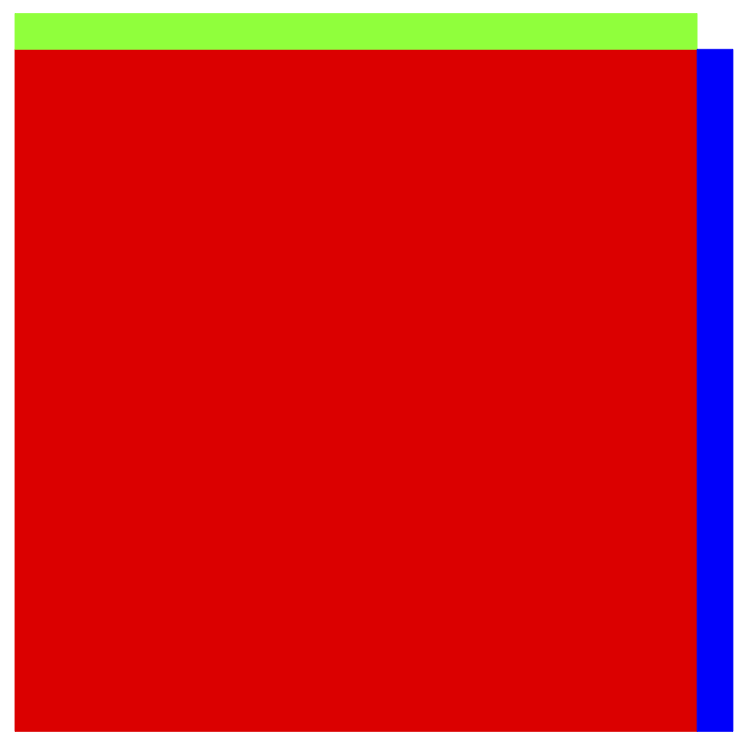} 
        \caption{The square ${\cS_r}$ (red), the rectangles $\cU_r$ (green) and $\cR_r$ (blue).}\label{fig:2}
     \end{minipage}
 \end{figure}
\begin{lem}\label{lem:nbhd}
Let ${\cA_r}$ and ${\cC_r}$ be as above. For any $0<r<\log 1.01$ and for any $\Lambda\in K_{r}$, we have $\Lambda_{\rm pr}\cap ({\cA_r}\cup {\cC_r})\neq \varnothing$. 
\end{lem}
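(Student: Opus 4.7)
The plan is to produce explicitly a primitive vector of $\Lambda$ in $\cA_r \cup \cC_r$, starting from a short primitive vector. By Minkowski's Convex Body Theorem applied to $[-1,1]^2$ combined with the hypothesis $\Lambda \in K_r$, there is a primitive vector $v_1 = (a, c) \in \Lambda$ with $\|v_1\|_{\infty} \in [e^{-r}, 1]$. Using the symmetries of $K_r$ and of $\cA_r \cup \cC_r$ under the coordinate sign flips $(x, y) \mapsto (\pm x, \pm y)$ acting on $\Lambda$ and under the axis swap $(x, y) \mapsto (y, x)$ (which interchanges $\cA_r$ and $\cC_r$), I may assume $0 \leq a \leq c$, so that $c \in [e^{-r}, 1]$. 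The case $a \leq \sqrt{e^{2r} - 1}$ is immediate: then $v_1 \in \cA_r$.

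In the nontrivial case $a > \sqrt{e^{2r} - 1}$, I would complete $v_1$ to a basis $(v_1, v_2)$ of $\Lambda$ with $\det(v_1, v_2) = 1$, write $v_2 = (b, d)$ so that $b = (ad - 1)/c$, and shift $v_2$ by an integer multiple of $v_1$ to arrange $d \in [0, c)$; the bound $ad \leq c^2 \leq 1$ then forces $b \leq 0$. The key elementary input is the inequality $\sqrt{e^{2r} - 1} < e^{-r}$, which holds comfortably for $r < \log 1.01$ and lets me split the range of $d$ into three pieces: $[0, \sqrt{e^{2r} - 1}]$, the middle interval $(\sqrt{e^{2r} - 1}, e^{-r})$, and $[e^{-r}, c)$.

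In the first piece, $|d| < e^{-r}$ combined with the $K_r$-bound $\|v_2\|_{\infty} \geq e^{-r}$ forces $|b| \geq e^{-r}$, while $|b| = (1 - ad)/c \leq 1/c \leq e^r$ is immediate, so $-v_2 \in \cC_r$. In the third piece, I would use the primitive vector $v_1 - v_2 = (a - b,\, c - d)$: its $y$-coordinate satisfies $0 < c - d \leq 1 - e^{-r} < \sqrt{e^{2r} - 1}$ (the last inequality a short one-variable check for $r > 0$), and the identity $a - b = (a(c - d) + 1)/c$ gives $a - b \geq 1/c \geq 1$ on the low end and $a - b \leq c - e^{-r} + 1/c \leq e^r$ on the high end, the latter because $c + 1/c$ is decreasing on $(0, 1]$ and so is maximized over $[e^{-r}, 1]$ at $c = e^{-r}$. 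Finally, I expect the middle piece to be vacuous: the $K_r$-bound again forces $ad \leq 1 - c e^{-r}$, while $a, d > \sqrt{e^{2r} - 1}$ gives $ad > e^{2r} - 1$, and AM-GM yields $e^{2r} + c e^{-r} \geq e^{2r} + e^{-2r} > 2$ for $r > 0$ (using $c \geq e^{-r}$), which rearranges to $e^{2r} - 1 > 1 - c e^{-r}$ and contradicts the previous two bounds on $ad$. The main obstacle is not any single estimate but the careful bookkeeping of signs across the symmetry reductions and the three sub-cases; once set up, every analytic step reduces to a one-variable inequality.
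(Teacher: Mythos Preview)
Your argument is correct, though it proceeds quite differently from the paper's proof. One minor imprecision: $\cA_r\cup\cC_r$ is not literally invariant under all four sign flips (e.g.\ $(x,y)\mapsto(-x,y)$ sends $\cC_r$ to $-\cC_r$), but since $\Lambda_{\rm pr}=-\Lambda_{\rm pr}$, the statement $\Lambda_{\rm pr}\cap(\cA_r\cup\cC_r)\neq\varnothing$ is equivalent to $\Lambda_{\rm pr}\cap(\pm\cA_r\cup\pm\cC_r)\neq\varnothing$, and the latter set \emph{is} invariant under the full dihedral group. With that understood, your symmetry reduction is legitimate, and each of your three sub-cases checks out: the middle interval is indeed vacuous by the AM--GM contradiction $e^{2r}+e^{-2r}>2$, and in the two outer intervals your explicit vectors $-v_2$ and $v_1-v_2$ land in $\cC_r$ as claimed.

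The paper takes a more symmetric route. Rather than starting from a single short vector and completing to a basis, it applies Minkowski's theorem twice, to the vertical and horizontal strips $\cU_r$ and $\cR_r$ of area $4$ abutting $\cS_r$, to produce primitive vectors $\bm v_1\in\cU_r$ and $\bm v_2\in\cR_r$; it then chooses each closest to its respective axis and argues, via a midpoint contradiction, that the parallelogram they span has area exactly $1$ (not $2$). The identity $|\cP_{\bm v_1,\bm v_2}|=1$ rearranges to $|t_1t_2|\le e^{2r}-1$, whence $\min(|t_1|,|t_2|)\le\sqrt{e^{2r}-1}$, which is precisely the $\cA_r\cup\cC_r$ condition. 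The paper's area argument is more conceptual and delivers the bound $\sqrt{e^{2r}-1}$ in one stroke; your basis-completion approach avoids the ``closest to the axis'' selection and the midpoint-in-$\cR_r$ contradiction, trading them for a three-way case split and several one-variable inequalities. Both are elementary and of comparable length.
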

\begin{proof}
Let $\cU_r$ be the closed rectangle with vertices $(\pm e^{-r}, e^{-r})$ and $(\pm e^{-r},e^r)$, and let $\cR_r$ be the closed rectangle with vertices $(e^{-r},\pm e^{-r})$ and $(e^r,\pm e^{-r})$, see Figure \ref{fig:2}. Let $${\widetilde{\cU}_r}:=\{\bm{x}\in \R^2\ |\ -\bm{x}\in \cU_r\}.$$ Consider the rectangle $\cU_r\sqcup {\cS_r}\sqcup \widetilde{\cU}_r$ and note that it has area $4$. For any $\varepsilon>0$ let $\cU_{r,\varepsilon}$ be the open rectangle with vertices $\left(\pm e^{-r},\pm(e^r+\varepsilon)\right)$. Applying the Minkowski's Convex Body Theorem to $\cU_{r,\varepsilon}$ and letting $\varepsilon$ approach zero, we see that for any $\Lambda\in X$, $\Lambda_{\rm{pr}}$ intersects $\cU_r\sqcup {\cS_r}\sqcup \widetilde{\cU}_r$ nontrivially. Now let $\Lambda\in K_{r}$; since $\Lambda$ has no nonzero point in ${\cS_r}$ and $\Lambda_{\rm{pr}}$ is invariant under inversion, we have $\Lambda_{\rm{pr}}\cap \cU_r\neq \varnothing$. Similarly we also have $\Lambda_{\rm{pr}}\cap \cR_r\neq \varnothing$. Moreover, {we note that for $0<r<\log 1.01$, we have $\Lambda\cap \cU_r=\Lambda_{\rm pr}\cap \cU_r$ and $\Lambda\cap \cR_r=\Lambda_{\rm pr}\cap \cR_r$. This is because otherwise there would be some nonzero point $\bm{v}\in \Lambda\cap (\cU_r\cup\cR_r)$ and some integer $k\geq 2$ such that $\frac{\bm{v}}{k}\in \Lambda_{\rm pr}$, but $\bm{v}\in \cU_r\cup\cR_r$ and $k\geq 2$ imply that $\frac{\bm{v}}{k}\in {\cS_r}$, contradicting the assumption that $\Lambda_{\rm pr}\cap {\cS_r}=\varnothing$.}
Let $\bm{v}_1=(t_1,1+v_1)$ be a point in $\Lambda_{\rm pr}\cap \cU_r$ that is closest to the $y$-axis and let $\bm{v}_2=(1+v_2,t_2)$ be a point in $\Lambda_{\rm pr}\cap \cR_r$ that is closest to the $x$-axis. We thus have $|t_i|\leq e^{-r}$ and $e^{-r}\leq 1+v_i\leq e^r$ for $i=1,2$. 

Let $\cP_{\bm{v}_1,\bm{v}_2}$ be the parallelogram spanned by $\bm{v}_1$ and $\bm{v}_2$. Then we have for $0<r<\log 1.01$
$$|\cP_{\bm{v}_1,\bm{v}_2}|=|(1+v_1)(1+v_2)-t_1t_2|=(1+v_1)(1+v_2)-t_1t_2\leq e^{2r}+e^{-2r}<3,$$
where $|\cP_{\bm{v}_1,\bm{v}_2}|$ denote the area of $\cP_{\bm{v}_1,\bm{v}_2}$, and for the second equality we used that $$(1+v_1)(1+v_2)\geq e^{-2r}\geq |t_1t_2|.$$ Thus $|\cP_{\bm{v}_1,\bm{v}_2}|$ equals $1$ or $2$. We claim that $|\cP_{\bm{v}_1,\bm{v}_2}|=1$. Suppose not, then $|\cP_{\bm{v}_1,\bm{v}_2}|=2$ and we have for $0<r<\log 1.01$
$$t_1t_2=v_1+v_2+v_1v_2-1\leq 2(e^r-1)+(e^r-1)^2-1<0$$ 
and
$$|t_1t_2|=1-v_1-v_2-v_1v_2\geq 1-2(e^r-1)-(e^r-1)^2=2-e^{2r}>0.9.$$ 
This implies that $\min\{|t_1|,|t_2|\}>\frac{0.9}{e^{-r}}>0.9$. Since $t_1t_2<0$, without loss of generality we may assume that $t_2<0$. Then we have $-e^{-r}\leq t_2<-0.9$. On  one hand, since $|\cP_{\bm{v}_1,\bm{v}_2}|=2$ and $\bm{v}_1,\bm{v}_2\in \Lambda_{\rm pr}$, we have $$\bm{w}:=\frac{\bm{v}_1+\bm{v}_2}{2}=\left(\frac{t_1+1+v_2}{2},\frac{t_2+1+v_1}{2}\right)\in \Lambda.$$ On the other hand, we have $0<\frac{t_1+1+v_2}{2}\leq \frac{e^{-r}+e^r}{2}<e^{r}$, $0<\frac{t_2+1+v_1}{2}<\frac{1+v_1}{2}\leq \frac{e^{r}}{2}<e^{-r}$ and $\bm{w}\notin {\cS_r}$ implying that $\bm{w}\in \cR_r$. 
Thus $\bm{w}\in \Lambda\cap \cR_r=\Lambda_{\rm pr}\cap \cR_r$ is also a primitive vector of $\Lambda$. Moreover, since $-e^{-r}\leq t_2<-0.9$, we have $$0<\frac{t_2+1+v_1}{2}<\frac{e^r-0.9}{2}<\frac{1.01-0.9}{2}=0.055<|t_2|,$$ contradicting the assumption that $\bm{v}_2$ is the closest point in $\Lambda_{\rm pr}\cap \cR_r$ to the $x$-axis. We thus have proved the claim, and it implies that 
$$|t_1t_2|=|v_1+v_2+v_1v_2|\leq 2(e^r-1)+(e^r-1)^2=e^{2r}-1.$$
Hence we have $\min\{|t_1|,|t_2|\}\leq \sqrt{|t_1t_2|}\leq \sqrt{e^{2r}-1}$ which implies that $\Lambda_{\rm pr}\cap ({\cA_r}\cup {\cC_r})\neq \varnothing$ finishing the proof.
\end{proof}
The following lemma states that for $r>0$ small, the orbits $a_sK_{r}$ will completely leave the set $K_{r}$ very shortly, and will remain separated for quite a long time.
\begin{lem}\label{lem:trivialint}
For any $0<r<\log 1.01$ and any $6r\leq |s|\leq \log 1.9$, we have $$a_sK_{r}\cap K_{r}=\varnothing.$$
\end{lem}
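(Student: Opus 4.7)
The plan is to invoke Lemma \ref{lem:nbhd} for both $\Lambda$ and $a_s\Lambda$, and to resolve the residual case via an integer-determinant argument. First I would use the involution $w := \begin{pmatrix}0 & 1 \\ -1 & 0\end{pmatrix}$, which preserves $\cS_r$ (hence $K_r$) and conjugates $a_s$ to $a_{-s}$, to reduce to $s \in [6r, \log 1.9]$. Assuming for contradiction that $\Lambda \in K_r$ and $a_s\Lambda \in K_r$, I would apply Lemma \ref{lem:nbhd} to $a_s\Lambda$ to extract a primitive vector $\bm{w} \in (a_s\Lambda)_{\rm pr} \cap (\cA_r \cup \cC_r)$, and similarly apply it to $\Lambda$ if needed to extract $\bm{v} \in \Lambda_{\rm pr} \cap (\cA_r \cup \cC_r)$.

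The first easy case is $\bm{w} \in \cC_r$: writing $\bm{w} = (u, t)$ with $u \in [e^{-r}, e^r]$ and $|t| \leq \sqrt{e^{2r}-1}$, the vector $a_{-s}\bm{w} = (e^{-s}u, e^s t)$ lies in $\Lambda$; its first coordinate is $\leq e^{r-s} \leq e^{-5r} < e^{-r}$ (using $s \geq 6r$) and its second coordinate has absolute value at most $e^s\sqrt{e^{2r}-1}$, which is $< e^{-r}$ iff $e^{2(s+r)}(e^{2r}-1) < 1$ -- a strict inequality that is uniform on the parameter range (the extremal check at $s = \log 1.9$, $r = \log 1.01$ gives about $0.074$). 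Hence $a_{-s}\bm{w} \in \Lambda \cap \cS_r$ is nonzero, contradicting $\Lambda \in K_r$. Symmetrically, if $\bm{w} \in \cA_r$ and $\bm{v} \in \cA_r$, one obtains a nonzero $a_s\bm{v} \in a_s\Lambda \cap \cS_r$, contradicting $a_s\Lambda \in K_r$.

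The hard part will be the remaining sub-case $\bm{v} \in \cC_r$ and $\bm{w} \in \cA_r$, in which neither $a_s\bm{v}$ nor $a_{-s}\bm{w}$ is small -- they lie far from the origin in roughly perpendicular directions. The resolution I have in mind is an index argument: writing $\bm{v} = (u, t) \in \Lambda$ and $\bm{v}' := a_{-s}\bm{w} = (e^{-s}t', e^s u') \in \Lambda$, these are linearly independent (the determinant computed below is positive), so $|\det(\bm{v}, \bm{v}')|$ equals the index $[\Lambda : \Z\bm{v} + \Z\bm{v}']$ and is hence a positive integer. Expanding, $\det(\bm{v}, \bm{v}') = uu'e^s - tt'e^{-s}$ with $u, u' \in [e^{-r}, e^r]$ and $|t|, |t'| \leq \sqrt{e^{2r}-1}$, giving the bounds
\[e^{s-2r} - (e^{2r}-1)e^{-s} \le \det(\bm{v}, \bm{v}') \le e^{s+2r} + (e^{2r}-1)e^{-s}.\]
The upper bound stays strictly below $2$ on the parameter range (the extremal value at $s = \log 1.9$, $r = \log 1.01$ is approximately $1.96$), while the lower bound must exceed $1$. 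The latter is precisely what forces $s \geq 6r$: the lower bound is increasing in $s$, so the critical case is $s = 6r$, where it reduces to $g(r) := e^{4r} - e^{-4r} + e^{-6r}$. One has $g(0) = 1$ and $g'(r) = 8\cosh(4r) - 6 e^{-6r} > 8 - 6 = 2$ for every $r > 0$, so $g$ is strictly increasing and $g(r) > 1$ throughout $(0, \log 1.01)$. This places $\det(\bm{v}, \bm{v}') \in (1, 2)$, which cannot be a positive integer -- the desired contradiction.
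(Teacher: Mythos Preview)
Your argument is correct and essentially coincides with the paper's proof. The paper phrases your first two cases as the set containments $a_s\cA_r\subseteq\cS_r$ and $\cC_r\subseteq a_s\cS_r$ (which immediately force the surviving vectors into $\cA_r$ and $a_s\cC_r$), and then carries out the identical determinant bound $1<e^{s-2r}-(e^{2r}-1)e^{-s}\le|\det|\le e^{s+2r}+(e^{2r}-1)e^{-s}<2$; your explicit derivative check that $g(r)=e^{4r}-e^{-4r}+e^{-6r}>1$ on $(0,\log 1.01)$ simply supplies the verification the paper leaves implicit.
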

\begin{proof}
 Suppose not, then there exists some $\Lambda\in a_sK_{r}\cap K_{r}$, and by definition the intersection of $\Lambda_{\rm pr}$ with ${\cS_r}\cup a_s{\cS_r}$ is empty. Without loss of generality we may assume that $s>0$. By Lemma \ref{lem:nbhd} we have $\Lambda_{\rm pr}\cap ({\cA_r}\cup {\cC_r})\neq \varnothing$ and similarly, $\Lambda_{\rm pr}\cap (a_s {\cA_r}\cup a_s {\cC_r})\neq \varnothing$. We note that $a_s {\cA_r}$ is the rectangle with vertices $(\pm e^s\sqrt{e^{2r}-1}, e^{r-s})$ and $(\pm e^s\sqrt{e^{2r}-1},e^{-r-s})$. Since $e^{6r}\leq e^s\leq 1.9$ we have $a_s{\cA_r}\subseteq {\cS_r}$ implying that $\Lambda_{\rm pr}\,\cap\, a_s {\cC_r}\neq \varnothing$. Similarly, we have ${\cC_r}\subseteq a_s {\cS_r}$ and this implies that $\Lambda_{\rm pr}\cap {\cA_r}\neq \varnothing$ (see Figure~\ref{fig:4}). Let $\bm{v}_1\in \Lambda_{\rm pr}\cap {\cA_r}$ and $\bm{v}_2\in \Lambda_{\rm pr}\cap a_s {\cC_r}$, and let $\cP_{\bm{v}_1,\bm{v}_2}$ be the parallelogram spanned by $\bm{v}_1$ and $\bm{v}_2$. Then for $0<r<\log 1.01$ and $6r\leq s\leq \log 1.9$ we have
$$1<e^{s-2r}-(e^{2r}-1)e^{-s}\leq |\cP_{\bm{v}_1,\bm{v}_2}|\leq e^{s+2r}+(e^{2r}-1)e^{-s}<2$$
{contradicting the fact that $|\cP_{\bm{v}_1,\bm{v}_2}|$ is a positive integer}.
\end{proof}
\begin{figure}[H]
  \centering
   \begin{minipage}{0.45\textwidth}
        \centering
        \includegraphics[width=0.7\textwidth]{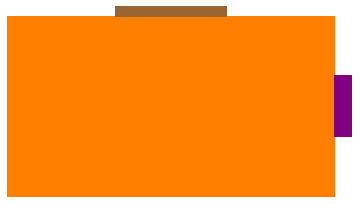}
        \caption{Figure \ref{fig:1} under the flow $a_s$: The rectangles $a_s{\cS_r}$ (orange), $a_s{\cA_r}$ (brown) and $a_s{\cC_r}$ (purple).}\label{fig:3}
     \end{minipage}
   \begin{minipage}{0.45\textwidth}
        \centering
        \includegraphics[width=0.7\textwidth]{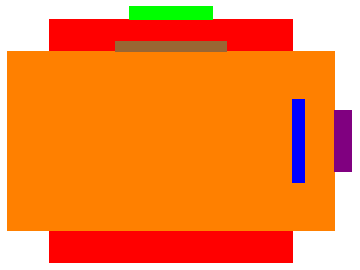} 
        \caption{Figure \ref{fig:1} and Figure \ref{fig:3} in one picture: The rectangle $a_s{\cA_r}$ (brown) is contained in ${\cS_r}$ (red), the rectangle ${\cC_r}$ (blue) is contained in $a_s{\cS_r}$ (orange).}\label{fig:4}
    \end{minipage} \hfill
     \end{figure}
We can now give the 
\begin{proof}[Proof of Theorem \ref{thm:thickening}]
We prove the upper and lower bounds separately. For the upper bound, we first note that for any $\bm{v}\in \R^2$, $e^{-|s|}\|\bm{v}\|\leq \|a_s\bm{v}\|\leq e^{|s|}\|\bm{v}\|$. {Hence} for any $\Lambda\in X$ {we have}
$$|\Delta(a_s\Lambda)-\Delta(\Lambda)|\leq |s|.$$
{This implies that for any $s\in \R$ and any $r>0$
\begin{equation}\label{equ:upperestimate}
a_sK_{r}\subset K_{r+|s|}.
\end{equation}}
Let $N=\left \lceil{\frac{1}{r}}\right \rceil$. 
{Using \eqref{equ:upperestimate} and the fact that $\frac{1}{N}\leq r$} we can estimate
$$\bigcup_{0\leq s<1}a_{-s}K_{r}=\bigcup_{0\leq i< N}\bigcup_{0\leq t<\frac{1}{N}}a_{-\frac{i}{N}}a_{-t}K_{r}\subset \bigcup_{0\leq i< N}a_{-\frac{i}{N}}K_{2r}.$$
Hence by Theorem \ref{thm:measure} and since $N\asymp \frac{1}{r}$ we have
$$\mu\left(\bigcup_{0\leq s<1}a_{-s}K_{r}\right)\leq 
\sum_{i=0}^{N-1}\mu\left(a_{-\frac{i}{N}}K_{2r}\right)\asymp r\log\Big(\frac{1}{r}\Big).$$ 
For the lower bound, for $0<r<\log 1.01$ let $N=\left \lfloor{\frac{1}{6r}}\right \rfloor$. First we have
$$\bigcup_{0\leq i< \left \lfloor{N\log 1.9}\right \rfloor}a_{-\frac{i}{N}}K_{r}\subseteq \bigcup_{0\leq s<1}a_{-s}K_{r}.$$
Moreover, for each $0\leq i<j<\left \lfloor{N\log 1.9 }\right \rfloor$, $6r\leq \frac{1}{N}\leq \frac{j-i}{N}<\log 1.9$, thus by Lemma \ref{lem:trivialint} we have
$$a_{-\frac{i}{N}}K_{r}\cap a_{-\frac{j}{N}}K_{r}=a_{-\frac{j}{N}}\left(a_{\frac{j-i}{N}}K_{r} \cap K_{r}\right)=\varnothing.$$
Thus the union $\bigcup_{0\leq i< \left \lfloor{N\log 1.9}\right \rfloor}a_{-\frac{i}{N}}K_{r}$ is disjoint and, {again} applying Theorem \ref{thm:measure} {and noting that $N\asymp \frac{1}{r}$} we can estimate
\begin{displaymath}
\mu\left(\bigcup_{0\leq s<1}a_{-s}K_{r}\right)\geq 
\sum_{i=0}^{\left \lfloor{N\log 1.9}\right \rfloor-1}\mu\left(a_{-\frac{i}{N}}K_{r}\right)\asymp r\log\Big(\frac{1}{r}\Big),
\end{displaymath}
finishing the proof.
\end{proof}

\begin{proof}[Proof of Corollary \ref{cor:series}]
First we note that we can assume that $\lim\limits_{s\to\infty}r(s)=0$ since otherwise both series would diverge. 
It follows that there exists 
$N>0$ such that for any $n>N$, $0<r(n)<\log 1.01$. Next, since $r(\cdot)$ is non-increasing,  for any $n>N$ we have
$$\bigcup_{0\leq s<1}a_{-s}K_{r(n+1)}\subset \widetilde{B}_n\subset \bigcup_{0\leq s<1}a_{-s}K_{r(n)}.$$
Moreover, since $n>N$ we have $0<r(n+1)\leq r(n)<\log 1.01$. Applying Theorem \ref{thm:thickening} to the left and right hand sides of the above inclusion relations we get
$$r(n+1)\log\left(\frac{1}{r(n+1)}\right)\ll
\mu\left(\widetilde{B}_n\right)\ll r(n)\log\left(\frac{1}{r(n)}\right)$$
which finishes the proof.
\end{proof}

\section{The dynamical Borel-Cantelli lemma}\label{sec:shrinkingtargets}
In this section we give the proof of Theorem \ref{thm:dynamical01} based on Theorem \ref{KW}. 
Recall 
that for a given 
function $\psi:[t_0,\infty)\to {(0,\infty)}$ with $t_0\geq 1$ fixed, we say a real number $x\in \R$ is \textit{$\psi$-Dirichlet} if the system of inequalities
$$|qx-p|<\psi(t)\quad \textrm{and}\quad |q|<t$$
has a solution in $(p,q)\in \Z\times (\Z\ssm\{0\})$ for all sufficiently large $t$. Let us denote by $D(\psi)$ the set of all $\psi$-Dirichlet numbers. 
Theorem \ref{KW} gives a zero-one law for the Lebesgue measure of $D(\psi)$ as follows:
if $\psi: [t_0,\infty)\to {(0,\infty)}$ is a continuous, non-increasing function satisfying 
\eqref{equ:con1psi} 
and \eqref{equ:con2psi}, then the series
\eqref{equ:zeroone} diverges (resp.\ converges) if and only if the Lebesgue measure of $D(\psi)$ $($resp. of $D(\psi)^c$$)$ is zero.

\medskip

For our purpose, we prove the following slightly modified version of \textsl{Dani Correspondence}.
\begin{lem}\label{lem:dani}
Let $\psi:[t_0,\infty)\to (0,\infty)$ be a continuous, non-increasing function satisfying \eqref{equ:con1psi} and \eqref{equ:con2psi}. Then there exists a unique continuous, non-increasing function 
$$r=r_{\psi}: [s_0,\infty)\to (0,\infty),\ \textrm{where $s_0=\frac12\log t_0-\frac12\log \psi(t_0)$},$$
such that 
\begin{equation}\label{equ:con2}
\textrm{the function $s\mapsto s+r(s)$ is non-decreasing,}
\end{equation}
and
\begin{equation}\label{equ:con3}
\psi(e^{s-r(s)})=e^{-s-r(s)}\ \textrm{for all $s\geq s_0$}.
\end{equation}
Conversely, given a continuous, non-increasing function $r: [s_0,\infty)\to (0,\infty)$ satisfying \eqref{equ:con2}, then there exists a unique continuous, non-increasing function $\psi=\psi_r: [t_0,\infty)\to (0,\infty)$ with $t_0=e^{s_0-r(s_0)}$ satisfying \eqref{equ:con1psi}, \eqref{equ:con2psi} and \eqref{equ:con3}. Furthermore, if we assume $\lim_{t\to\infty}t\psi(t)=1$ $($or equivalently, $\lim_{s\to\infty}r(s)=0$$)$, then the series in \eqref{equ:zeroone} diverges if and only if the series
\begin{equation}\label{equ:series}
\sum_nr(n)\log\left(\frac{1}{r(n)}\right)
\end{equation}
diverges.
\end{lem}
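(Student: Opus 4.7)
The plan is to realize the correspondence $\psi \leftrightarrow r$ as a reparametrization through a common variable $t \in [t_0, \infty)$. Given $\psi$ satisfying \eqref{equ:con1psi}--\eqref{equ:con2psi}, I would introduce
$$\sigma(t):=\tfrac12\bigl(\log t-\log\psi(t)\bigr)\quad\text{and}\quad \rho(t):=-\tfrac12\log\bigl(t\psi(t)\bigr),$$
whose defining features are $\sigma-\rho=\log t$ and $\sigma+\rho=-\log\psi(t)$. The hypotheses on $\psi$ translate cleanly: $\sigma$ is a continuous strictly increasing homeomorphism from $[t_0,\infty)$ onto $[s_0,\infty)$, and $\rho$ is positive (by \eqref{equ:con2psi}) and non-increasing (by \eqref{equ:con1psi}). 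Setting $r_\psi:=\rho\circ\sigma^{-1}$ then yields the required function: it satisfies \eqref{equ:con3} by construction, and $s+r_\psi(s)=-\log\psi(\sigma^{-1}(s))$ being non-decreasing gives \eqref{equ:con2}. Uniqueness is immediate since, for fixed $s$, $r\mapsto\psi(e^{s-r})$ is non-decreasing while $r\mapsto e^{-s-r}$ is strictly decreasing, so \eqref{equ:con3} has at most one solution.

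For the converse, I would invert this construction. Given $r$ as stated, the map $s\mapsto s-r(s)$ is continuous and strictly increasing (both summands are non-decreasing, with $s$ strictly so), yielding a homeomorphism $[s_0,\infty)\to[t_0,\infty)$ via $t=e^{s-r(s)}$, along which \eqref{equ:con3} defines $\psi_r$. Then $t\psi_r(t)=e^{-2r(s)}\in(0,1)$ is non-decreasing in $t$ (as $r$ is non-increasing), and $\psi_r$ is non-increasing (since $-\log\psi_r(t)=s+r(s)$ is non-decreasing), verifying \eqref{equ:con1psi}, \eqref{equ:con2psi}, \eqref{equ:con3}; uniqueness of $\psi_r$ is clear because \eqref{equ:con3} pins down its value at every $t=e^{s-r(s)}\in[t_0,\infty)$.

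Under the standing assumption $r(s)\to 0$ (equivalently $t\psi(t)\to 1$), the plan for the series equivalence is to show both series are comparable to $I:=\int_{s_0}^\infty r(s)\log(1/r(s))\,ds$. For the $r$-side, $s\mapsto r(s)\log(1/r(s))$ is eventually non-increasing (since $x\mapsto x\log(1/x)$ is monotone on $(0,1/e)$ and $r(s)\to 0$), so the integral test gives $\sum_n r(n)\log(1/r(n))\asymp I$. For the $\psi$-side, the summand of \eqref{equ:zeroone} is eventually non-increasing in $n$ (as $1-t\psi(t)$ is non-increasing by \eqref{equ:con1psi}), so it is comparable to $J:=\int_{t_0}^\infty\frac{(1-t\psi(t))\log\tfrac1{1-t\psi(t)}}{t}\,dt$. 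Substituting $t=e^{s-r(s)}$ and using the asymptotics $1-e^{-2r}\asymp r$ and $\log\tfrac1{1-e^{-2r}}\asymp\log\tfrac1r$ for $r$ small, together with the integrated comparability $\log(b/a)\asymp\sigma(b)-\sigma(a)$ for $t_0\le a<b$ (upper bound from \eqref{equ:con1psi}, lower bound from $\psi$ being non-increasing), identifies $J$ with an integral $\asymp I$.

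The main obstacle I foresee is justifying the change-of-variables step in the last comparison, since $\sigma$ is only monotone and need not be differentiable. I would handle this via Lebesgue--Stieltjes integration against the bounded-variation function $\sigma$; the integrated bound $\log(b/a)\asymp\sigma(b)-\sigma(a)$ above then suffices to carry out the comparison $J\asymp I$ rigorously.
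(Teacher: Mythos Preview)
Your proposal is correct and follows essentially the same route as the paper: the correspondence is set up via the same change of variables $s=\tfrac12\log t-\tfrac12\log\psi(t)$, $r=-\tfrac12\log(t\psi(t))$ (the paper simply cites \cite[Lemma~8.3]{KleinbockMargulis1999} for this), and the series equivalence is reduced to comparing the integrals $\int\frac{(1-t\psi(t))\log\frac{1}{1-t\psi(t)}}{t}\,dt$ and $\int r(s)\log\frac{1}{r(s)}\,ds$ via the same asymptotics $1-e^{-2r}\asymp r$. The only genuine difference is in the Jacobian step: the paper invokes a.e.\ differentiability of the monotone function $r$ to write $\frac{dt}{t}=(1-r'(s))\,ds$ with $1\le 1-r'(s)\le 2$, whereas you phrase the same bound in integrated form as $\tfrac12\log(b/a)\le\sigma(b)-\sigma(a)\le\log(b/a)$ and appeal to Lebesgue--Stieltjes comparison of the measures $d(\log t)$ and $d\sigma$. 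This is a cosmetic variation of the same idea, arguably slightly cleaner since it sidesteps the null set where $r'$ may fail to exist.
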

\begin{proof}
The correspondence between $\psi=\psi_r$ and $r=r_{\psi}$ follows from the exact same construction as in \cite[Lemma 8.3]{KleinbockMargulis1999}, where {$\psi(\cdot)$ and $r(\cdot)$} determine each other with the relations
$$e^s\psi(t)=e^{-r(s)}=e^{-s}t$$
with $s$ and $t$ satisfying $s=\frac12\log t-\frac12 \log \psi(t)$. The only difference is that here we require the two extra assumptions \eqref{equ:con1psi} and \eqref{equ:con2psi} on $\psi$ which are respectively equivalent to the assumptions that $r(\cdot)$ is non-increasing and $r(\cdot)$ is positive. We refer the reader to \cite[Lemma 8.3]{KleinbockMargulis1999} for more details about this correspondence.

For the furthermore part, first we {claim} that the series in \eqref{equ:zeroone} diverges if and only if the integral 
\begin{equation}\label{equ:integral}
\int_{t_0}^{\infty}\frac{-\left(1-t\psi(t)\right)\log\left(1-t\psi(t)\right)}{t}\,dt
\end{equation}
diverges. It suffices to show the function $G(t):=-\log\left(1-t\psi(t)\right)\left(1-t\psi(t)\right)$ is eventually non-increasing in $t$. Note that the function $T\mapsto -T\log T$ is strictly increasing on the interval $(0,e^{-1})$. Since $\lim_{t\to\infty}t\psi(t)=1$ and $t\psi(t)<1$ for all $t\geq t_0$, there exists some $T_0>t_0$ such that for all $t>T_0$, $0<1-t\psi(t)<e^{-1}$. Moreover, {together with} the assumption {\eqref{equ:con1psi}} we get that $G(t)$ is non-increasing in $t$ for any $T>T_0$, {finishing the proof the claim}.
Next, since $r(\cdot)$ is positive and non-increasing, we have $0<r(s)\leq r(s_0)$. Thus there exist constants $0<c_1<c_2$ such that for all $s\geq s_0$ and all $t\geq t_0$ with $s=\frac12\log t-\frac12 \log \psi(t)$, we have
$$c_1r(s)\leq 1-t\psi(t)=1-e^{-2r(s)}\leq c_2r(s).$$
{This also implies that
$$-\log(1-t\psi(t))=-\log\left(r(s)\right)+O_{c_1,c_2}(1)\asymp_{c_1,c_2}-\log(r(s)),$$
where for the second estimate we used that $\lim\limits_{s\to\infty}r(s)=0$.}
Moreover, since $r(\cdot)$ is non-increasing and continuous, it is differentiable at Lebesgue almost every $s\in \R$, and we denote by $r'(s)$ for its derivative at $s{\in\R}$ whenever it exists. Using the relation $t=e^{s-r(s)}$ we 
get  $\frac{dt}{t}=\big(1-r'(s)\big)\,ds$ for Lebesgue almost every $s\in \R$. We thus have
\begin{align*}
\int_{t_0}^{\infty}\frac{-\left(1-t\psi(t)\right)\log\left(1-t\psi(t)\right)}{t}\,dt&\asymp_{c_1,c_2} \int_{s_0}^{\infty}-r(s)\log(r(s))\big(1-r'(s)\big)\,ds\\
&\asymp \int_{s_0}^{\infty}-r(s)\log\big(r(s)\big)\,ds,
\end{align*}
where for the second estimate we used that $1\leq 1-r'(s)\leq 2$ for Lebesgue almost every $s\in \R$ which comes from the assumption \eqref{equ:con2} and that $r(\cdot)$ is non-increasing. Finally, we conclude the proof by noting that the integral $\int_{s_0}^{\infty}-r(s)\log\big(r(s)\big)\,ds$ diverges if and only if the series $\sum_{n}-r(n)\log(r(n))$ diverges since $\lim_{s\to\infty}r(s)=0$ and $r(\cdot)$ is non-increasing which imply that the function $s\mapsto -r(s)\log(r(s))$ is eventually non-increasing in $s$.
\end{proof}

As mentioned in the introduction, we have the following dynamical interpretation of $\psi$-Dirichlet numbers.
\begin{lem}$($\cite[Proposition 4.5]{KleinbockWadleigh2018}$)$\label{lem:hitting}
Let $\psi: [t_0,\infty)\to {(0,\infty)}$ be a continuous and non-increasing function satisfying \eqref{equ:con1psi} and \eqref{equ:con2psi}. Let $r=r_{\psi}$ be as in Lemma \ref{lem:dani}. Then $x\in D(\psi)^c$ if and only if 
\begin{equation}\label{equ:unbdd}
{a_s\Lambda_x\in K_{r(s)}\text{ 
for {an unbounded set of }s,}}
\end{equation} where $a_s=\diag (e^s,e^{-s})$ and $\Lambda_x=\begin{pmatrix} 1 & x\\
0 &1\end{pmatrix}\Z^2\in X$ are as before.
\end{lem}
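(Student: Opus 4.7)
The plan is to translate the condition $a_s\Lambda_x\in K_{r(s)}$ into an explicit Diophantine inequality system on $x$, and then use the bijection from Lemma \ref{lem:dani} between the flow parameter $s$ and the Diophantine parameter $t$ to match failure of this system with failure of the $\psi$-Dirichlet property.

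First I would unwind the lattice. A generic vector of $\Lambda_x$ has the form $(p+qx,\,q)$ for $(p,q)\in\Z^2$, so
\[
a_s\begin{pmatrix}p+qx\\ q\end{pmatrix}=\begin{pmatrix}e^s(p+qx)\\ e^{-s}q\end{pmatrix}.
\]
Hence $a_s\Lambda_x\in K_{r(s)}$ if and only if no $(p,q)\in\Z^2\setminus\{(0,0)\}$ satisfies
\[
|p+qx|<e^{-s-r(s)}\quad\text{and}\quad |q|<e^{s-r(s)}.
\]
For $s$ large enough that $e^{-s-r(s)}<1$, any such solution must automatically have $q\neq 0$, since otherwise $|p|<1$ forces $p=0$ as well.

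Next, set $t=t(s):=e^{s-r(s)}$. Since $r$ is continuous and non-increasing on $[s_0,\infty)$, the map $s\mapsto s-r(s)$ is continuous and strictly increasing; the definition of $s_0$ combined with \eqref{equ:con3} yields $t(s_0)=t_0$, and $t(s)\to\infty$ as $s\to\infty$ because $r$ is bounded. Thus $t(\cdot)$ is an order-preserving homeomorphism from $[s_0,\infty)$ onto $[t_0,\infty)$. By \eqref{equ:con3}, $\psi(t)=e^{-s-r(s)}$, so after the substitution $p\mapsto -p$, the system above becomes: no $(p,q)\in\Z\times(\Z\setminus\{0\})$ with $|qx-p|<\psi(t)$ and $|q|<t$. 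This is precisely the negation of the $\psi$-Dirichlet inequality at parameter $t$.

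Finally, because $t(\cdot)$ is an order-preserving homeomorphism, the set of $s$ for which $a_s\Lambda_x\in K_{r(s)}$ is unbounded if and only if the set of $t$ for which no admissible $(p,q)$ exists is unbounded, which is exactly the statement $x\in D(\psi)^c$. The only mild technicalities are disposing of the $q=0$ case at small $s$, which is harmless as only unboundedness matters, and verifying that $t(\cdot)$ is indeed a bijection onto $[t_0,\infty)$, which follows directly from monotonicity and continuity of $s-r(s)$. I do not anticipate any substantial obstacle beyond these routine verifications.
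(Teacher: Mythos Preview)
Your argument is correct and is precisely the standard Dani-correspondence computation: you correctly translate $a_s\Lambda_x\in K_{r(s)}$ into the nonexistence of integer solutions to $|p+qx|<e^{-s-r(s)}$, $|q|<e^{s-r(s)}$, eliminate the $q=0$ case for large $s$, and then use the substitution $t=e^{s-r(s)}$ together with \eqref{equ:con3} to identify this with the failure of the $\psi$-Dirichlet system at parameter $t$. The verification that $s\mapsto e^{s-r(s)}$ is an order-preserving homeomorphism of $[s_0,\infty)$ onto $[t_0,\infty)$ (using that $r$ is non-increasing, continuous, and bounded) is exactly what is needed to pass between unboundedness in $s$ and in $t$.

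There is nothing to compare against in the present paper: the lemma is not proved here but simply quoted from \cite[Proposition~4.5]{KleinbockWadleigh2018}. Your write-up is essentially the proof one finds there (and, more generally, the $m=l=1$ specialization of the argument in \cite[\S8]{KleinbockMargulis1999}), so the approaches coincide.
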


Combining Theorem \ref{KW} with Lemma{s} \ref{lem:dani} and \ref{lem:hitting}, we immediately have the following zero-one law.
\begin{prop}\label{prop:zeroone}
Let $r :[s_0,\infty)\to (0,\infty)$ be continuous, non-increasing, {satisfying \eqref{equ:con2}} and such that $\lim\limits_{s\to\infty}r(s)=0$. Then \eqref{equ:unbdd} holds for Lebesgue almost every (resp.\ almost no) $x\in \R$ 
provided that the series 
\eqref{equ:series} diverges (resp.\ converges).
\end{prop}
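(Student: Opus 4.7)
The plan is to assemble Proposition \ref{prop:zeroone} as a direct corollary of the three ingredients already developed: Theorem \ref{KW}, Lemma \ref{lem:dani} and Lemma \ref{lem:hitting}. Given $r:[s_0,\infty)\to (0,\infty)$ continuous, non-increasing, satisfying \eqref{equ:con2}, and with $r(s)\to 0$, I would first invoke the converse direction of Lemma \ref{lem:dani} to produce the associated $\psi=\psi_r:[t_0,\infty)\to (0,\infty)$ (with $t_0=e^{s_0-r(s_0)}$) which is continuous, non-increasing and satisfies \eqref{equ:con1psi}, \eqref{equ:con2psi} together with the matching identity \eqref{equ:con3}. Since $r(s)\to 0$, the matching relation $e^{-r(s)}=e^s\psi(e^{s-r(s)})$ forces $t\psi(t)\to 1$, so we are in the regime where the last assertion of Lemma \ref{lem:dani} applies.

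Next I would apply Theorem \ref{KW} to this $\psi$. It gives a zero-one dichotomy for the Lebesgue measure of $D(\psi)$ governed by convergence or divergence of the Kleinbock-Wadleigh series \eqref{equ:zeroone}. By the furthermore clause of Lemma \ref{lem:dani}, this series is comparable (in the sense of simultaneous convergence/divergence) to the target series \eqref{equ:series}, so the hypothesis on $\sum_n r(n)\log(1/r(n))$ can be read off as a hypothesis on the Kleinbock-Wadleigh series: divergence of our series gives $|D(\psi)|=0$, convergence of our series gives $|D(\psi)^c|=0$.

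Finally I would translate the statement from arithmetic to dynamics via Lemma \ref{lem:hitting}: for this very $r$ and $\psi$, a real number $x$ belongs to $D(\psi)^c$ precisely when \eqref{equ:unbdd} holds, i.e.\ $a_s\Lambda_x\in K_{r(s)}$ for an unbounded set of $s$. Pushing the Lebesgue zero-one dichotomy through this equivalence yields exactly the stated conclusion: divergence of \eqref{equ:series} forces \eqref{equ:unbdd} for almost every $x$, while convergence forces \eqref{equ:unbdd} for almost no $x$.

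I do not expect a real obstacle here, since the work has all been done in the preparatory lemmas; the proof is essentially a bookkeeping step that chases the correspondence $r\leftrightarrow \psi$ through the three results and verifies that all hypotheses line up (in particular that \eqref{equ:con2} plus $r(s)\to 0$ are exactly what is needed to put $\psi_r$ in the class covered by Theorem \ref{KW} and to identify the two series up to constants).
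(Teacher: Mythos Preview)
Your proposal is correct and matches the paper's approach exactly: the paper does not write out a proof of Proposition~\ref{prop:zeroone} at all but simply states that it follows immediately by combining Theorem~\ref{KW} with Lemmas~\ref{lem:dani} and~\ref{lem:hitting}, which is precisely the chain you describe. The only minor point worth noting is that after producing $\psi=\psi_r$ from the converse direction of Lemma~\ref{lem:dani}, you should observe that the bijectivity of the correspondence gives $r_{\psi_r}=r$, so Lemma~\ref{lem:hitting} indeed applies with your original $r$; but this is implicit in the uniqueness statements of Lemma~\ref{lem:dani}.
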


To connect the above proposition with the corresponding property of almost every $\Lambda\in X$, we need an auxiliary lemma, which borrows some ideas from the work \cite{KleinbockRao2019} of the first-named author with Anurag Rao.

\begin{lem}\label{lem:trick}
Let $r(\cdot)$ be as in Proposition \ref{prop:zeroone}. For any $c\in \R$ and $\lambda> 0$ let $$r_{c,\lambda}(s):=r(s+c)-\lambda e^{-2(s+c)},$$ and define
$${D_{c,\lambda}}:=\left\{x\in \R\ \left|\ \textrm{ $a_s\Lambda_x\in K_{r_{c,\lambda}(s)}$
for {an unbounded set of $s$}}\right.\right\}.$$
If the series 
\eqref{equ:series}  diverges, then the set 
$${D:=\bigcap_{c\in\R}\bigcap_{\lambda> 0}D_{c,\lambda}}$$ 
has full Lebesgue measure.
\end{lem}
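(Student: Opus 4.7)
The plan is to first reduce the uncountable intersection defining $D$ to a countable one, and then show each factor has full Lebesgue measure by combining Proposition \ref{prop:zeroone} with a Borel-Cantelli ``shell'' estimate based on Theorem \ref{thm:measure}.

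\emph{Countable reduction.} Given $(c^*,\lambda^*)\in\R\times(0,\infty)$, I would choose integers $n\geq c^*$ and $k\geq \lambda^* e^{2(n-c^*)}$. Since $r$ is non-increasing, $r(s+n)\leq r(s+c^*)$; and by the choice of $k$, $ke^{-2(s+n)}\geq \lambda^* e^{-2(s+c^*)}$. Subtracting gives $r_{n,k}(s)\leq r_{c^*,\lambda^*}(s)$ for all $s$, hence $K_{r_{n,k}(s)}\subseteq K_{r_{c^*,\lambda^*}(s)}$ and $D_{n,k}\subseteq D_{c^*,\lambda^*}$. Therefore $D=\bigcap_{n,k\in\N}D_{n,k}$, a countable intersection.

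\emph{Full measure of each $D_{n,k}$ via a shell estimate.} Fix $n,k\in\N$. The shifted function $r(\cdot+n)$ still satisfies the hypotheses of Proposition \ref{prop:zeroone} and has divergent series, so that proposition yields: for a.e.\ $x\in\R$, $a_s\Lambda_x\in K_{r(s+n)}$ for an unbounded set of $s$. Since $K_{r_{n,k}(s)}=K_{r(s+n)-ke^{-2(s+n)}}\subseteq K_{r(s+n)}$, it suffices to rule out that $a_s\Lambda_x$ spends all but finitely many of these visits in the ``shell''
\[\cS(s):=K_{r(s+n)}\setminus K_{r_{n,k}(s)}.\]
Theorem \ref{thm:measure} gives $\frac{d}{dr}\mu(K_r)\asymp r\log(1/r)$ for small $r>0$, so
\[\mu(\cS(s))\ll k e^{-2(s+n)}\cdot r(s+n)\log\bigl(1/r(s+n)\bigr).\]
Since $r\log(1/r)$ is bounded on $(0,1]$, $\sum_s\mu(\cS(s))<\infty$. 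Thickening $\cS(s)$ along the flow in the spirit of Theorem \ref{thm:thickening} preserves summability, and transferring from Lebesgue measure on $\R$ to the Haar measure $\mu$ on $X$ via equidistribution of the long unipotent arcs $\{a_s u_x\Z^2:x\in[0,R]\}$, the easy direction of the Borel-Cantelli lemma shows that for a.e.\ $x$ the event $a_s\Lambda_x\in\cS(s)$ holds only finitely often. Combined with the unboundedness of $\{s:a_s\Lambda_x\in K_{r(s+n)}\}$, this forces $x\in D_{n,k}$.

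The hardest step will be the Borel-Cantelli transfer: one needs effective equidistribution of horocyclic arcs under $a_s$ with error term strictly smaller than the already exponentially-small measure of the (thickened) shell, in order to rigorously compare the Lebesgue measure of $\{x\in[0,R]:a_s\Lambda_x\in\widetilde\cS(s)\}$ with $\mu(\widetilde\cS(s))$ uniformly in $s$.
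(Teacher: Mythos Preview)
Your countable reduction is correct and mirrors the paper's nesting argument; the uncountable intersection is indeed not the issue.

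The gap is in your second step. You want to show that for almost every $x\in\R$ the event $a_s\Lambda_x\in\mathcal S(s)$ occurs only finitely often, and you propose to do this by estimating $\mu\bigl(\widetilde{\mathcal S}(s)\bigr)$ and then transferring to Lebesgue measure on $\R$ via effective equidistribution of expanding horocycle arcs. But the shells $\mathcal S(s)$ are exactly the kind of targets for which this transfer breaks down: their $\mu$-measure decays like $e^{-2s}$, while any smooth approximation to $\chi_{\mathcal S(s)}$ has Sobolev norm blowing up as the shell thins, so the equidistribution error term will swamp the main term. This is precisely the obstruction flagged in the introduction of the paper (the sets $K_r$ are far from $\mathrm{SO}_2(\R)$-invariant), and it is the reason the whole paper proceeds non-dynamically via Theorem~\ref{KW} rather than via mixing.

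The paper sidesteps your hard step entirely by using Proposition~\ref{prop:zeroone} \emph{twice}, once in each direction, together with a halving trick. Write $r_c(s)=r(s+c)$ and $f_{c,\lambda}(s)=\lambda e^{-2(s+c)}$. The divergence case of Proposition~\ref{prop:zeroone} applied to $\tfrac12 r_c$ (whose series still diverges) gives, for a.e.\ $x$, an unbounded set $S_c$ of times $s$ with $\Delta(a_s\Lambda_x)\le \tfrac12 r_c(s)$. The \emph{convergence} case applied to $f_{c,\lambda}$ (whose series $\sum_n f_{c,\lambda}(n)\log(1/f_{c,\lambda}(n))$ converges) gives, for a.e.\ $x$, a threshold $T_{c,\lambda}$ beyond which $\Delta(a_s\Lambda_x)>f_{c,\lambda}(s)$. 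On $S_c\cap(T_{c,\lambda},\infty)$ one then has $f_{c,\lambda}(s)<\Delta(a_s\Lambda_x)\le\tfrac12 r_c(s)$, hence $f_{c,\lambda}(s)<\tfrac12 r_c(s)$, and therefore
\[
\Delta(a_s\Lambda_x)\le \tfrac12 r_c(s)<r_c(s)-f_{c,\lambda}(s)=r_{c,\lambda}(s).
\]
So no shell measure estimate and no equidistribution are needed: the exponential perturbation $\lambda e^{-2(s+c)}$ is itself a legitimate (convergent) target for Proposition~\ref{prop:zeroone}, and the factor $\tfrac12$ creates the room to combine the two conclusions. This is the missing idea in your proposal.
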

{\begin{rmk}
We note that by our assumption $r_{c,\lambda}(\cdot)$ is not necessarily always positive, and 
the set $K_{r_{c,\lambda}(s)}$ is empty whenever $r_{c,\lambda}(s)$ is negative.
\end{rmk}}
\begin{proof}[Proof of Lemma \ref{lem:trick}]
For any function $f: [s_f,\infty)\to (0,\infty)$ with $s_f\geq 1$ we denote
$$A_{\infty,f}:=\left\{x\in \R\ \left|\ \textrm{
$a_s\Lambda_x\in {K_{f(s)}}$ for {an unbounded set of $s>s_f$}}\right.\right\}$$
and $N_f:=\sum_{n\geq s_f}f(n)\log\left(\frac{1}{f(n)}\right)$. First we note that the divergence of the series $N_r$ is equivalent to the divergence of the series $N_{\frac12 r_c}$ for any $c\in \R$, where $r_c(s):=r(s+c) = r_{c,0}{(s)}$. Moreover, it is clear that $\frac12 r_c(\cdot)$ satisfies the assumptions in Proposition \ref{prop:zeroone}. Thus, by Proposition \ref{prop:zeroone}, if the series $N_r$ diverges, then the set $A_{\infty,\frac12r_c}$ is of full Lebesgue measure for any $c\in \R$. On the other hand, for any $c\in \R$ and $\lambda> 0$ let $f_{c,\lambda}(s)=\lambda e^{-2(s+c)}$. It is easy to check that $f_{c,\lambda}|_{[s_{c,\lambda},\infty)}$ satisfies the assumptions in Proposition \ref{prop:zeroone} with $s_{c,\lambda}:=\max\{\frac{\log(2\lambda)}{2}-c,1\}$, and the series $N_{f_{c,\lambda}}$ converges for any $c\in \R$ and $\lambda>0$. Thus by Proposition \ref{prop:zeroone} the set $A_{\infty,f_{c,\lambda}}$ is of zero Lebesgue measure for any $c\in \R$ and $\lambda>0$. Define 
$${\overline{A}}:=\bigcap_{c\in\R}A_{\infty,\frac12r_c}\quad \textrm{and}\quad {\underline{A}}:=\bigcup_{c\in\R}\bigcup_{\lambda>0}A_{\infty,f_{c,\lambda}}.$$
We note that since $r(\cdot)$ is non-increasing, for any $c_1<c_2$ we have $\frac12r_{c_1}\geq \frac12 r_{c_2}$ implying that $A_{\infty,\frac12r_{c_2}}\subset A_{\infty,\frac12r_{c_1}}$. Hence the family of sets $\{A_{\infty,\frac12r_{c}}\}_{c\in\R}$ is nested and ${\overline{A}}=\lim\limits_{c\to\infty}A_{\infty,\frac12r_c}$ is of full Lebesgue measure. Similarly, the family of sets $\{A_{\infty,f_{c,\lambda}}\}_{c\in\R,\lambda>0}$ is also nested and the set ${\underline{A}}=\lim\limits_{c\to-\infty}\lim\limits_{\lambda\to\infty}A_{\infty,f_{c,\lambda}}$ is of zero Lebesgue measure. Thus the set ${\overline{A}}\ssm {\underline{A}}$ is of full Lebesgue measure and it suffices to show that ${\overline{A}}\ssm {\underline{A}}\subset D$. That is, for any $x\in {\overline{A}}\ssm{\underline{A}}$ we want to show that for any $c\in \R$ and any $\lambda>0$ the events $a_s\Lambda_x\in K_{r_{c,\lambda}(s)}$ happen for {an unbounded set of $s$}. First we note that $x\in {\overline{A}}$ means that for any $c\in \R$ there exists an unbounded subset ${S_c}\subset\R$ such that $a_s\Lambda_x\in  {K_{\frac12r_c(s)}}$ for any $s\in {S_c}$. Secondly, we note that $x\notin {\underline{A}}$ means that for any $c\in \R$ and $\lambda>0$ there exists some constant $T_{c,\lambda}>0$ such that for any $s\geq T_{c,\lambda}$ we have $a_s\Lambda_x\in \Delta^{-1}(f_{c,\lambda}(s),\infty)$. In particular, for any $s\in {S_c}\cap (T_{c,\lambda},\infty)$ we have $$f_{c,\lambda}(s)<\Delta(a_s\Lambda_x)\leq \frac12r_c(s).$$ This implies that $0<\Delta(a_s\Lambda_x)\leq \frac12r_c(s)<\frac12r_c(s)+\frac12r_c(s)-f_{c,\lambda}(s)=r_{c,\lambda}(s)$ for any $s\in {S_c}\cap (T_{c,\lambda},\infty)$. Finally, we finish the proof by noting that since ${S_c}$ is unbounded, the set ${S_c}\cap (T_{c,\lambda},\infty)$ is also unbounded. 
\end{proof}
We can now give the 
\begin{proof}[Proof of Theorem \ref{thm:dynamical01}]
The convergent case follows directly from Corollary \ref{cor:series} and the classical Borel-Cantelli lemma, and we thus only need to prove the divergent case. Let $r: [s_0,\infty)\to (0,\infty)$ be continuous, non-increasing, {satisfying \eqref{equ:con2}} and such that the series 
\eqref{equ:series} 
 diverges; we want to show that $\mu({B_{\infty}})=1$. First we note that we can assume that $\lim_{s\to\infty}r(s)=0$, since otherwise the result would follow from the ergodicity of the flow $\{a_s\}_{s > 0}$ on $X$. Let ${D:=\bigcap_{c\in\R}\bigcap_{\lambda> 0}D_{c,\lambda}}$ be as in Lemma \ref{lem:trick} and define ${B}\subset X$ such that
$${B}=\left\{\left.\begin{pmatrix}
a & 0\\
b & a^{-1}\end{pmatrix}\Lambda_x\in X\ \right|\ b\in \R,\  a>0,\  x\in {D}\right\} .$$
We note that by Lemma \ref{lem:trick} the set ${D}$ has full Lebesgue measure. Thus the set ${B}\subset X$ is also of full measure (with respect to $\mu$) and it suffices to show that ${B}\subset {B_{\infty}}$. First, by direct computation for $\Lambda=\begin{pmatrix}
a & 0\\
b & a^{-1}\end{pmatrix}\Lambda_x\in {B}$ we have
\begin{equation}\label{equ:reduction}
a_s\Lambda=\begin{pmatrix}
1 & 0\\
e^{-2s}a^{-1}b & 1\end{pmatrix}a_{s+\log a}\Lambda_x.
\end{equation}
Next, for any $y\in \R$ let $u^-_y=\begin{pmatrix}
1 & 0\\
y & 1\end{pmatrix}$. Note that for any $\bm{v}\in\R^2$, $\|u_y^-\bm{v}\|\leq (|y|+1)\|\bm{v}\|$. This implies that for any $\Lambda\in X$
$$|\Delta(u^{-}_y\Lambda)-\Delta(\Lambda)|\leq \log(1+|y|).$$
Using the above inequality, the relation \eqref{equ:reduction} and the inequality $\log(1+x)<2x$ for all $x>0$, we get
$$\left|\Delta(a_s\Lambda)-\Delta(a_{s+\log a}\Lambda_x)\right|\leq 2a^{-1}|b|e^{-2s}.$$
Since $x\in {D}$ we have for any $c\in \R$ and any $\lambda>0$, $a_s\Lambda_x\in K_{r_{c,\lambda}(s)}$ for {an unbounded set} of $s$. In particular, taking $c=-\log a$, $\lambda=2a^{-1}|b|$ we get
$$0\leq \Delta(a_s\Lambda)\leq \Delta(a_{s-c}\Lambda_x)+\lambda e^{-2s}\leq r_{c,\lambda}(s-c)+\lambda e^{-2s}=r(s)$$
for {an unbounded set} of $s$, finishing the proof.
\end{proof}

\bibliographystyle{alpha}
\bibliography{DKbibliog}

\end{document}